\documentclass[reqno]{amsart}
\pdfoutput=1
\usepackage{amssymb}
\usepackage{amsthm}
\usepackage{bbm}
\usepackage{graphicx}

\usepackage[colorlinks=true]{hyperref}
\usepackage[all]{xypic}

\usepackage[final]{epsfig}
\usepackage{psfrag}
\usepackage{epstopdf}

\newtheorem{thm}{Theorem}[section]

\newtheorem{proposition}[thm]{Proposition}

\newtheorem{lemma}[thm]{Lemma}

\newtheorem{lem}[thm]{Lemma}
\newtheorem{prop}[thm]{Proposition}

\theoremstyle{definition}
\newtheorem{definition}{Definition}[section]

\theoremstyle{observation}

\theoremstyle{definition}

\newcommand{\imp}{\frak{I}}
\newcommand{\pimp}{\frak{P}}

\newcommand{\A}{\mathcal{A}}

\newcommand{\bd}{\partial}

\newcommand{\be}{\beta}

\newcommand{\uc}{\mathbb{S}^1}

\newcommand{\defin}[1]{{\it #1}}

\newcommand{\Q}{\mathbb{Q}}

\theoremstyle{theorem}

{\bf}{\it}
{\bf}{\it}
\newtheorem*{riemann-mapping-theorem}{Riemann Mapping  Theorem}
{\bf}{\it}
{\bf}{\it}
{\bf}{\it}
{\bf}{\it}
{\bf}{\it}
{\bf}{\it}
{\bf}{\it}
\newtheorem*{carat}{Carath{\'e}odory Theorem}{\bf}{\it}
\newtheorem*{baby-carat}{Carath{\'e}odory Theorem for locally connected domains}{\bf}{\it}
{\bf}{\it}
{\bf}{\it}

\theoremstyle{remark}

\newenvironment{pf}{\proof[\proofname]}{\endproof}
\newenvironment{pf*}[1]{\proof[#1]}{\endproof}
\usepackage{euscript}

\newcommand{\cal}[1]{{\mathcal #1}}

\newcommand{\beq}{\begin{equation}}
\newcommand{\eeq}{\end{equation}}

\newtheorem{defn}{Definition}[section]

\newcommand{\diam}{\operatorname{diam}}
\newcommand{\length}{\operatorname{length}}
\newcommand{\dist}{\operatorname{dist}}

\renewcommand{\mod}{\operatorname{mod}}
\newcommand{\tl}{\tilde}

\newcommand{\eps}{\epsilon}




\numberwithin{equation}{section}
\newcommand{\thmref}[1]{Theorem~\ref{#1}}
\newcommand{\propref}[1]{Proposition~\ref{#1}}

\newcommand{\lemref}[1]{Lemma~\ref{#1}}

\newcommand{\cA}{{\cal A}}

\newcommand{\cM}{{\cal M}}

\newcommand{\cB}{{\cal B}}

\newcommand{\cO}{{\cal O}}

\newcommand{\CC}{{\mathbb C}}
\newcommand{\RR}{{\mathbb R}}

\newcommand{\TT}{{\mathbb T}}
\newcommand{\ZZ}{{\mathbb Z}}
\newcommand{\NN}{{\mathbb N}}
\newcommand{\DD}{{\mathbb D}}

\newcommand{\QQ}{{\mathbb Q}}

\newcommand{\ignore}[1]{{}}

\newcommand{\inter}[1]{\overset{\circ}{#1}}

\title[Non-computable impressions]{Non-Computable impressions of computable external rays of quadratic polynomials.}
\author{Ilia Binder, Cristobal Rojas, Michael Yampolsky}
\thanks{I.B. and M.Y. were partially supported by NSERC Discovery Grants. C.R. was partially supported by Proyecto FONDECYT  No. 11110226. }
\date{February 3, 2014}
\begin{document}
\begin{abstract}
We discuss computability of impressions of prime ends of compact sets. In particular, we 
construct  quadratic Julia sets which possess explicitly described non-computable impressions.
\end{abstract}

\maketitle
\section{Introduction}

Informally speaking, a compact subset of the plane is computable if there exists an algorithm to visualize it on a computer screen with an arbitrary given resolution. Of central interest in applications to Complex Dynamics is the question of computability of the
Julia set of a rational mapping. It is known (\cite{BY-book,BY-MMJ}) that there exist quadratic polynomials $f_c(z)=z^2+c$
with computable coefficients $c$ and  with non-computable Julia sets $J_c$. This non-computability phenomenon is quite subtle.
In particular,  the filled Julia set $K_c$
is computable \cite{BY-book}, and,
moreover, the harmonic measure $\omega_c$ of the Julia set is computable \cite{BBRY}. 
Thus the parts of the Julia set which are hard to compute are ``inward pointing'' decorations, forming narrow fjords of $K_c$.
If the fjords are narrow enough, they will not appear in a finite-resolution image of $K_c$, which explains how the former can
be computable even when $J_c$ is not. Furthermore, a very small portion of the harmonic measure resides in the fjords, again
explaining why it is always possible to compute the harmonic measure.

Suppose the Julia set $J_c$ is connected, and denote 
$$\phi_c:\hat\CC\setminus \overline{\DD}\to\hat\CC\setminus K_c$$
 the unique conformal mapping satisfying the normalization $\phi_c(\infty)=\infty$ and $\phi_c'(\infty)=1$.
Carath{\'e}odory Theory (see e.g. \cite{Mil} for an exposition) implies that $\phi_c$ extends continuously to map 
the unit circle $\uc$ onto the {\it Carath{\'e}odory completion} $\hat J_c$ of the Julia set. 
An element of the set $\hat J_c$ is a {\it prime end} $p$ of $\CC\setminus K_c$. The impression $\imp(p)$ of a prime end 
is a subset of $J_c$ which should roughly be thought as a part of $K_c$ accessible by a particular approach from the exterior.
The harmonic measure $\omega_c$ can be viewed as the pushforward of the Lebesgue measure on $\uc$ onto the set of prime end
impressions.

In view of the above quoted results, from the point of view of computability, prime end impressions should be seen as borderline
objects. On the one hand, they are subsets of the Julia set, which may be non-computable, on the other they are ``visible from
infinity'', and as we have seen accessibility from infinity generally implies computability.

It is thus natural to ask:

\medskip
\noindent
{\bf Question 1.} {\sl Is the impression of a prime end of $\hat\CC\setminus K_c$ always computable?}

\medskip
\noindent
To formalize the above question, we need to describe a way of specifying a prime end. We recall that {\it the external ray}
$R_\alpha$ of angle $\alpha\in \RR/\ZZ$ is the image under $\phi_c$ of the radial line $\{re^{2\pi i\alpha}\;:\;r>1\}$.
The curve $$R_\alpha=\phi_c(\{re^{2\pi i\alpha}\;:\;r>1\})$$
lies in $\hat\CC\setminus K_c$. The {\it principal impression of an external ray} $\pimp(R_\alpha)$ is the set of limit points of $\phi_c(re^{2\pi i\alpha})$ as $r\to 1$.
If the principal impression of $R_\alpha$ is a single point $z$, we say that $R_\alpha$ {\it lands} at $z$. External rays play a very important role in the study of polynomial dynamics.
 
It is evident that every principal impression is contained in the impression of a unique prime end. We call the impression of this prime end the 
{\it prime end impression of an external ray} and denote it
$\imp(R_\alpha)$. A natural refinement of Question 1 is the following:

\medskip
\noindent
{\bf Question 2.} {\sl Suppose $\alpha$ is a computable angle. 
Is the prime end impression $\imp(R_\alpha)$ computable?}

\medskip
\noindent
The purpose of this paper is to prove that the answer is emphatically negative: 

\bigskip

\noindent\textbf{Main Theorem.} There exists a computable complex parameter $c$ and a computable Cantor set of angles 
$C\subset \uc$ such that for every angle $\alpha \in C$, the impression $\imp(R_{\alpha})\subset J_c$ is not computable.
Moreover, any compact subset $K\Subset J_c$ which contains $\imp(R_\alpha)$ is non-computable.
\bigskip

\noindent
{\bf Acknowledgment.} The authors would like to thank Sasha Blokh for several stimulating discussions of this and related topics.


 \section{A brief introduction to Computability}
We give a very brief summary of relevant notions of Computability Theory and Computable Analysis. For a more in-depth
introduction, the reader is referred to \cite{BY-book,BBRY}.
As is standard in Computer Science, we formalize the notion of
an algorithm as a {\it Turing Machine} \cite{Tur}.
It is more intuitively familiar, and provably equivalent,
to think of an algorithm as a program written in any standard programming language.
In any programming language there is only a countable number of possible algorithms. Fixing the language, we can
enumerate them all (for instance, lexicographically). Given such an ordered list $(\cA_n)_{n=1}^\infty$ of all
algorithms, the index $n$ is usually called the \emph{G\"odel number} of the algorithm $\cA_{n}$.

We will call a  function $f:\NN\to\NN$  \emph{computable} (or {\it recursive}), if there exists an algorithm $\cA$ which, upon input $n$, outputs $f(n)$.
A set $E\subseteq \NN$ is said to be \defin{computable} (or {\it recursive}) if its characteristic function
$\chi_E:\NN\to\{0,1\}$ is computable.

Since there are only countably many algorithms, there exist only countably many computable subsets of $\NN$. A well known ``explicit'' example of a non computable set is given by the \emph{Halting set}
$$H:=\{i \text{ such that }\cA_{i}\text{ halts}\}.$$
 Turing \cite{Tur} has shown that  there is no algorithmic procedure to decide, for any $i\in\NN$, whether or not the algorithm with G\"odel number $i$, $\cA_{i}$, will eventually halt.

Extending algorithmic notions to functions of real numbers was pioneered by Banach and Mazur \cite{BM,Maz}, and
is now known under the name of {\it Computable Analysis}. Let us begin by giving the modern definition of the notion of computable real
number,  which goes back to the seminal paper of Turing \cite{Tur}.

\begin{definition}A real number $x$ is called

\begin{itemize}
\item \defin{computable} if there is a computable function $f:\NN \to \QQ$ such that $$|f(n)-x|<2^{-n};$$
\item \defin{lower-computable} if there is a computable function $f:\NN \to \QQ$ such that
$$f(n)\nearrow x;$$
\item \defin{upper-computable} if there is a computable function $f:\NN \to \QQ$ such that
$$f(n)\searrow x.$$
\end{itemize}
\end{definition}

	Algebraic numbers or  the familiar constants such as $\pi$, $e$, or the Feigembaum (\cite{Hoy09}) constant are all computable. However, the set of all computable numbers $\RR_C$ is necessarily
countable, as there are only countably many computable functions. Lower  (or upper)-computable numbers are also called \emph{left (or right)}-computable. 
It is straightforward to see that a number is computable if it is simultaneously left- and right-computable.
It is easy to present an example of a non-computable left- or right-computable real number. For instance, define the Halting predicate
$p(i)$ to be equal to $1$ if $A_i$ halts and $0$ otherwise. The number
$$\alpha=\sum_{n=1}^\infty 10^{-n}p(n)$$
is evidently not computable. To see that it is left computable, let $p(i,j)$ be the predicate expressing the truth of the sentence
``$A_i$ halts on step $j$'', and set
$$\alpha_k=\sum_{n=1}^k\sum_{j=1}^kp(n,j).$$
Naturally, $\alpha_k\nearrow \alpha.$

For more general objects, computability is typically defined according to the following principle: object $x$ is computable if there exists an algorithm $\A$ which, upon input $n$, outputs a finite suitable description of $x$  at precision  $n$. In this case we say that algorithm $\A$ \emph{computes} object $x$. 

For instance, computability of compact subsets of $\RR^n$ is defined as follows. Recall that {\it Hausdorff distance} between two
compact sets $K_1$, $K_2$ is
$$\dist_H(K_1,K_2)=\inf_\eps\{K_1\subset U_\eps(K_2)\text{ and }K_2\subset U_\eps(K_1)\},$$
where $U_\eps(K)=\bigcup_{z\in K}B(z,\eps)$ stands for an $\eps$-neighbourhood of a set.

We say that $K\Subset\RR^n$ is {\it computable} if there exists an algorithm $A$ with a single input $n\in\NN$ which outputs a
finite set $C_n$
of points with rational coordinates such that
$$\dist_H(C_n,K)<2^{-n}.$$

An equivalent, and more intuitive, way of defining a computable set is the following. Let us say that a {\it pixel} is a dyadic
cube with side $2^{-n}$ and dyadic rational vertices. A set $K$ is computable if there exists an algorithm $A$ which given a pixel
with side $2^{-n}$ outputs $0$ if the center of the pixel is at least $2\cdot 2^{-n}$-far from $K$, outputs $1$ is the center is at most
$2^{-n}$-far from $K$, and outputs either $0$ or $1$ in the ``borderline'' case.

In this paper
we will speak of {\it uniform computability} whenever a group of computable objects (functions, sets, etc) is computed by a single
algorithm:

\medskip
\noindent
\emph{the objects $\{x_\gamma\}_{\gamma\in\Gamma}$ are computable \defin{uniformly on
a countable set $\Gamma$} if there exists an algorithm $\cA$ with an input $\gamma\in\Gamma$, such that for all
$\gamma\in \Gamma$, $\cA_\gamma:=\cA(\gamma,\cdot)$ computes $x_{\gamma}$}.\\

For instance, a sequence $x_{n}$ of computable points is \emph{uniformly computable} if there is a single algorithm $\A$ which for every $n$ and $m$ outputs a rational number satisfying $|\A(n,m)-x_{n}|<2^{-m}$. 

To define a computable real-valued function we need to introduce another notion. We say that a function $\phi:\NN\to\QQ$ is an {\it oracle} for $x\in \RR$ if for every $m\in \NN$
$$d(\phi(m),x)<2^{-m}$$

On each step, an algorithm may {\it query} an oracle by reading the value of the function $\phi$ for an arbitrary $m\in\NN$. 

Let $S\subset \RR$. Then a function $f:S\to \RR$ is called {\it computable} if
there exists an algorithm $\cA$ with an oracle for $x\in S$ and an input $n\in\NN$ which outputs a rational number $s_n$
such that $|s_n-f(x)|<2^{-n}.$
In other words, given an arbitrarily good approximation of the input of $f$ it is possible to constructively
approximate the value of $f$  with any desired precision. Open sets can be described by means of {\it rational balls}: balls with rational centres and radii. An open set $A\subset \RR$ is called {\it lower-computable} if it is the union of a computable sequence of rational balls.  It is easy to see that a function $f$ is computable if the preimages of rational balls are uniformly lower-computable open sets.  Computability of functions and open sets of $\CC$, $\RR^n$, etc, is defined in
a similar fashion.

The following well known characterization of computable compact sets will be used in the sequel. 

\begin{proposition}\label{computableset}A compact set $K$ is computable if and only if $(i)$ there is a sequence $x_{n}\in K$ of uniformly computable points which is dense in $K$ and $(ii)$ the complement $K^{c}$ is a lower-computable open set.  
\end{proposition}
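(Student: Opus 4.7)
The plan is to prove the two directions separately.

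\textbf{Forward direction} ($K$ computable implies (i) and (ii)). Given the algorithm producing finite rational sets $C_n$ with $\dist_H(C_n,K)<2^{-n}$, every point of $C_n$ is a rational within $2^{-n}$ of $K$. To extract a uniformly computable dense sequence in $K$, I would enumerate pairs $(n,q)$ with $q\in C_n$ and associate to each such $(n,q)$ a computable Cauchy sequence converging to a point of $K$: starting from $q$, pick $q_{n+1}\in C_{n+1}$ with $|q-q_{n+1}|<2^{-n}+2^{-n-1}$, then $q_{n+2}\in C_{n+2}$ with $|q_{n+1}-q_{n+2}|<2^{-n-1}+2^{-n-2}$, and so on. The resulting limits lie in $K$ and form a dense uniformly computable family, which proves (i). For (ii), I would enumerate rational balls $B(q,r)$ certified to lie in $K^c$ using the observation that $K\subset U_{2^{-n}}(C_n)$, so that $d(q,C_n)>r+2^{-n}$ implies $B(q,r)\subset K^c$; conversely, any rational ball with $B(q,r)\subset K^c$ satisfies this criterion for some $n$, and so gets enumerated.

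\textbf{Backward direction} ((i) and (ii) imply $K$ computable). Assume an a priori bounded region containing $K$ (true in our setting of Julia sets). To produce $C_n$ with $\dist_H(C_n,K)<2^{-n}$, I would fix a finite dyadic grid at scale $2^{-n-3}$ in that region and, for each grid point $q$, run two semi-decision procedures in parallel:
\begin{itemize}
\item[$(A)$] Certify $B(q,2^{-n-1})\subset K^c$ by searching for a finite subcover of $\overline{B(q,2^{-n-1})}$ among the rational balls enumerated from the lower-computable presentation of $K^c$.
\item[$(B)$] Certify $d(q,K)<2^{-n}$ by computing the uniformly computable $x_i$ to sufficient precision and finding a rational approximation $\widetilde x_i$ with $d(\widetilde x_i,q)$ strictly less than $2^{-n}$ minus the error bound.
\end{itemize}
Since the $x_i$ are dense in $K$, every $q$ satisfies at least one of the two conditions; since each procedure is semi-decidable, at least one terminates. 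If $(A)$ wins, discard $q$; if $(B)$ wins, append $\widetilde x_i$ to $C_n$. Every appended point is within $2^{-n}$ of $K$, and every $y\in K$ lies near some grid point $q$ at which $(B)$ must have won, so $y$ is within $2^{-n}$ of the appended point (after tracking constants).

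\textbf{Main obstacle.} The substantive step is the backward direction: arguing that the race at each grid point terminates in finite time. The key point is that conditions $(A)$ and $(B)$ are jointly exhaustive (either $q$ is strictly farther than $2^{-n-1}$ from $K$, or some $x_i$ lies within $2^{-n}$ of it), and both are semi-decidable, so compactness of the certificates forces finite-time termination at every grid point. A minor technical issue is the need for an ambient bound containing $K$ to list the finite grid; this is harmless in our application, where $K$ is a Julia set lying in a known disk.
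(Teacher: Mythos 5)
The paper cites Proposition \ref{computableset} as a well-known fact and gives no proof, so there is no internal argument to compare against; what follows is an assessment of your proof on its own terms. Your argument is the standard one and is essentially correct: the forward direction produces the dense computable sequence by linking up nearby points of the $C_n$ into Cauchy sequences, and the backward direction races a semi-decidable ``far from $K$'' certificate (finite cover of $\overline{B(q,2^{-n-1})}$ by listed balls of $K^c$) against a semi-decidable ``close to $K$'' certificate (a nearby $x_i$); the two conditions are jointly exhaustive by density of $\{x_i\}$, so the race terminates, and the bookkeeping of constants is routine.

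Two points deserve attention. First, in the forward direction for (ii), your sentence ``conversely, any rational ball with $B(q,r)\subset K^c$ satisfies this criterion for some $n$'' is not literally true: if $q$ is at distance exactly $r$ from $K$, then $d(q,C_n)$ can be as small as $r-2^{-n}$, which never exceeds $r+2^{-n}$. But this overclaim is not needed. What the definition of lower-computable requires is only that the \emph{union} of the enumerated balls equal $K^c$, and that does hold: for any $z\in K^c$ with $\dist(z,K)=\delta>0$, a slightly shrunk rational ball around $z$ (say radius $\delta/4$ with center within $\delta/8$ of $z$) will be enumerated once $2^{-n}<\delta/8$. So the construction is right even though the stated justification isn't.

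Second, you are right to flag the need for an a priori bound on $K$ in the backward direction. Conditions (i) and (ii) as stated do not hand the algorithm a radius $R$ with $K\subset B(0,R)$, and without one there is no finite grid to enumerate. This is a genuine subtlety in the statement when read over all of $\CC$, and the clean resolution in the present setting is exactly the one you indicate: all sets to which the proposition is applied in this paper live in a known disk (Julia sets of $z^2+c$ with $|c|\le 2$ lie in $\overline{B(0,2)}$, and the example domain in Section 3 lies in a fixed square), so the bound is available to the algorithm. Equivalently, one may regard $K$ as a compact subset of $\hat\CC$ and include in the lower-computable presentation of $\hat\CC\setminus K$ one explicit rational ball about $\infty$; that is precisely the extra datum the bound provides. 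With the bound in hand, your race argument closes the proof, subject only to the elementary adjustment of the grid scale and precision thresholds so that the final $C_n$ satisfies $\dist_H(C_n,K)<2^{-n}$ rather than $<c\cdot 2^{-n}$.
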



\medskip
\noindent

\section{An example of a computable set with a non-computable  impression.}

We refer the reader to \cite{Mil} for a detailed exposition of Carath{\'e}odory Theory of prime ends. Here we briefly recall
the main definitions. Let $\Omega\subset\hat \CC$ be a connected domain. Arbitrarily fix a base point $\omega\in\Omega$.
A {\it crosscut} $\gamma$ is the image of a simple curve
$$\tl\gamma:[0,1]\to \hat\CC$$
with the properties
$$\tl\gamma:(0,1)\to\Omega,\; \tl\gamma(0)\neq\tl\gamma(1),\text{ and }\tl\gamma(\{0,1\})\subset\partial\Omega.$$
We call the image of the open interval $(0,1)$ {\it the interior} of $\gamma$ and denote it $\inter{\gamma}$.

For a crosscut $\gamma$ the {\it crosscut neighborhood} $N_\gamma$ will denote the connected component of $\Omega\setminus \gamma$
which does not contain the base point $\omega$.

A {\it fundamental chain} is a sequence of crosscuts $\{\gamma_i\}_{i=1}^\infty$ which satisfies the following properties:
\begin{itemize}
\item for all $i$, $\inter{\gamma}_i\subset N_{\gamma_{i-1}}$;
\item $\diam \gamma_i\to 0$.
\end{itemize}
Two fundamental chains $\{\gamma_i\}$ and $\{\kappa_j\}$ are equivalent if for every $i$ there exists $j$ such that
$$N_{\kappa_j}\supset {\inter{\gamma}}_i,$$
and vice versa.
An equivalence class of fundamental chains is called a {\it prime end}. The {\it impression} $\imp(\mathbf p)$ of a prime end $\mathbf p$ is the intersection
$$\imp(\mathbf p)=\cap \overline{N_{\gamma_i}}$$
for any fundamental chain representative $\{\gamma_i\}$ of the equivalence class $\mathbf p$.
The space of prime ends possesses a natural topology, an open set in which is specified by a crosscut neighborhood $\gamma$. It forms
the {\it Carath{\'e}odory boundary} $\hat\partial\Omega$ of the domain $\Omega$; together, $\hat\partial\Omega$ and $\Omega$ form the
{\it Carath{\'e}odory closure} $\hat\Omega$.

If the domain $\Omega$ is simply-connected, and its complement contains at least two points, then $\hat\Omega$ is homeomorphic to the
closed unit disk $\overline{\DD}$. In this case, denote
$$\varphi:\Omega\to\DD$$
the conformal Riemann mapping, normalized so that $\varphi(\omega)=0$ and $\varphi'(\omega)>0.$ The key statement of Carath{\'e}odory Theory is:

\begin{carat}
The map $\varphi$ extends to a homeomorphism between $\hat\Omega$ and $\overline{\DD}.$
\end{carat}

We will need the following quantitative version of Carath{\'e}odory Theorem, due to
 Lavrientiev (see \cite{BRY}, Proposition 6.1):
\begin{thm}\label{lavr} Let $\Omega\subset \hat\CC$ be a simply-connected domain, whose complement contains at least two points,
 with $\infty\not\in \Omega$, and let $\omega\in\Omega$ be a
base point. Let $\gamma$ be a crosscut of $ \Omega$, such that $\dist(\gamma,  \omega)\geq M$, for some $M>0$ and $ N_\gamma$ be the component of $ \Omega\setminus\gamma$ not containing $ \omega$. Assume that $\epsilon^2<M/4$. Then
$$\diam(\gamma)\leq \epsilon^{2} \implies \diam(\varphi( N_\gamma))\leq \frac{30\epsilon}{\sqrt{M}}.$$
\end{thm}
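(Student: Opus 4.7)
The plan is to combine the Koebe distortion theorem with a length--area estimate. Written as $\diam(\varphi(N_\gamma)) \leq C\sqrt{\diam(\gamma)/M}$ with $C=30$, the target inequality is of Beurling type, and the square root is naturally produced by the $(1-|\zeta|)^2$ denominator in Koebe's growth theorem.

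First, I would show that $\varphi(\gamma)$ is trapped in a thin annular collar of $\partial\DD$. The Koebe growth estimate applied to the univalent map $\varphi^{-1}:\DD\to\Omega$, normalized by $\varphi^{-1}(0)=\omega$, gives
\[
  |\varphi^{-1}(\zeta) - \omega| \;\leq\; |(\varphi^{-1})'(0)| \cdot \frac{|\zeta|}{(1-|\zeta|)^2}.
\]
Since $\gamma$ lies at Euclidean distance $\geq M$ from $\omega$, this forces $1-|\zeta| \leq \sqrt{|(\varphi^{-1})'(0)|/M}$ for every $\zeta \in \varphi(\gamma)$. In parallel, Koebe's $1/4$ theorem applied at any point of $\varphi(\gamma)$ together with the elementary observation $\dist(\gamma,\partial\Omega)\leq\diam(\gamma)=\epsilon^2$ pins down the local conformal radius of $\varphi^{-1}$ on $\varphi(\gamma)$ by $4\epsilon^2$.

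Next, I would fix $z_0\in\gamma$ and run a length--area estimate on the circular crosscuts $C_r = \Omega\cap\partial B(z_0,r)$ for $r\in(\epsilon^2, R)$, with $R$ chosen of order $\sqrt{M\epsilon^2}$, below $M$ so that each $C_r$ contains a crosscut of $\Omega$ separating $\gamma$ from $\omega$. The Cauchy--Schwarz inequality
\[
   \length(\varphi(C_r))^2 \;\leq\; 2\pi r \int_{C_r} |\varphi'|^2\,|dz|,
\]
integrated against $dr/r$ and combined with the change of variables $\int |\varphi'|^2 dA = \operatorname{Area}(\varphi(\cdot))$ together with $\operatorname{Area}(\varphi(\Omega))\leq \pi$, yields
\[
   \int_{\epsilon^2}^R \frac{\length(\varphi(C_r))^2}{r}\,dr \;\leq\; 2\pi^2.
\]
An averaging argument extracts a value $r^*\in(\epsilon^2,R)$ for which $\length(\varphi(C_{r^*}))$ is suitably small. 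The component of $\varphi(C_{r^*})$ which actually separates $\varphi(\gamma)$ from $0$ in $\DD$ is then a short curve; because $\varphi(\gamma)$ is trapped radially near $\partial\DD$ by the first step, this short separating curve can only cut off a correspondingly short arc of $\partial\DD$, and $\varphi(N_\gamma)$, bounded by that arc together with $\varphi(\gamma)$, has diameter controlled by their sum. A careful tracking of the constants produces the stated bound $30\epsilon/\sqrt{M}$.

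The hard part, I expect, is obtaining the polynomial bound $\epsilon/\sqrt{M}$ rather than the merely logarithmic bound $1/\sqrt{\log(M/\epsilon^2)}$ that the bare length--area argument produces. Bridging this gap requires genuinely using the radial trapping of the first step to refine the area estimate $\operatorname{Area}(\varphi(\Omega\cap A))\leq\pi$ to something of order $\sqrt{|(\varphi^{-1})'(0)|/M}$, and then eliminating the unknown quantity $|(\varphi^{-1})'(0)|$ via the local conformal radius bound at points of $\varphi(\gamma)$ from the first step.
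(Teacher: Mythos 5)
The theorem you are asked to prove is not actually proved in this paper; it is quoted as a result of Lavrentiev, with a reference to Proposition~6.1 of \cite{BRY}. So there is no ``paper's own proof'' to compare against; I will assess your argument on its own merits.

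Your plan heads in a sensible direction -- the combination of Koebe estimates with a length--area or modulus argument is the standard toolbox for Beurling-type diameter bounds -- and you are right that a bare length--area estimate over the dyadic annulus $B(z_0,R)\setminus B(z_0,\eps^2)$ only produces the logarithmic rate $\length(\varphi(C_{r^*}))\lesssim (\log(R/\eps^2))^{-1/2}$. The problem is that the repair you sketch does not close this gap, and the final sentence ``a careful tracking of the constants produces the stated bound'' is precisely the step that fails.

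Concretely, the ``radial trapping'' estimate from the Koebe growth theorem reads
$1-|\zeta|\le\sqrt{|(\varphi^{-1})'(0)|/M}$ for $\zeta\in\varphi(\gamma)$, and the refined area bound you propose is
$\operatorname{Area}\!\bigl(\varphi(\Omega\cap B(z_0,R))\bigr)\lesssim\sqrt{|(\varphi^{-1})'(0)|/M}$.
Both involve the uncontrolled quantity $|(\varphi^{-1})'(0)|$, and when $\Omega$ is ``large'' relative to $M$ (for instance a round disk $B(\omega,R_0)$ with $R_0\ge M$, which is allowed), one has $|(\varphi^{-1})'(0)|\ge M$ and the trapping estimate is vacuous. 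Your suggested elimination of $|(\varphi^{-1})'(0)|$ via the Koebe one-quarter bound gives, for $\zeta\in\varphi(\gamma)$, the pair of inequalities
$(1-|\zeta|)^2\le |(\varphi^{-1})'(0)|/M$ and $|(\varphi^{-1})'(0)|(1-|\zeta|)^2\le 32\eps^2$;
multiplying them and eliminating the conformal radius yields only $(1-|\zeta|)^4\le 32\eps^2/M$, i.e.\ $1-|\zeta|\lesssim\sqrt{\eps}\,M^{-1/4}$. Feeding the corresponding area bound back into the length--area integral likewise produces a bound of order $\sqrt{\eps/\sqrt{M}}$, which is the square root of the target $\eps/\sqrt{M}$ -- off by a whole exponent, not just a constant. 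So the mechanism you describe genuinely does not reach the polynomial rate.

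What is missing is the step that makes the Lavrentiev/Beurling rate appear naturally: one should not bound the length of a single good level curve $\varphi(C_{r^*})$ by averaging against the area, but instead estimate the extremal distance (equivalently, the modulus of the family of curves in $\Omega$ separating $\gamma$ from $\omega$) from below by $\tfrac{1}{2\pi}\log\!\bigl(M/(2\eps^2)\bigr)$, transfer it by conformal invariance to $\DD$, and then convert the extremal-distance lower bound into an upper bound on the length of the arc $I=\partial\DD\cap\partial\varphi(N_\gamma)$ and hence on $\diam\varphi(N_\gamma)$. This last conversion is the substantive point: in the disk, a crosscut neighbourhood not containing $0$ whose separating family has extremal distance $\lambda$ satisfies $\diam\lesssim e^{-\pi\lambda}$, and with $\lambda\ge\tfrac{1}{2\pi}\log(M/2\eps^2)$ this gives exactly $\diam\lesssim\sqrt{\eps^2/M}=\eps/\sqrt{M}$. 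Equivalently one can argue via Beurling's projection theorem for the harmonic measure $\omega(\omega,\gamma,\Omega\setminus\overline{N_\gamma})$, which directly controls $|I|$. Either route avoids the quantity $|(\varphi^{-1})'(0)|$ entirely. As written, your argument stops short of this and therefore has a genuine gap.
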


Let $K$ be a compact connected subset of $\CC$ with a connected complement, which contains at least two points,
set $$\Omega\equiv\hat\CC\setminus K.$$
Denote $\phi$ the conformal homeomorphism $$\phi:\hat\CC\setminus \overline{\DD}\to \Omega$$
normalized by $\phi(\infty)=\infty$ and $\phi'(\infty)=1$.

As before, for $\alpha\in\RR/\ZZ$ we let the external ray $R_\alpha$
be the image
$$R_\alpha=\phi(\{re^{2\pi i\alpha}\;:\;r>1 \}),$$
and we say that the {\it principal impression} $\pimp(R_\alpha)$ is the set of limit points of $\phi(re^{2\pi i\alpha})$ as $r\searrow 1$.
If $\pimp(R_\alpha)$ is a single point $a\in \partial K$, then we say that the ray $R_\alpha$ {\it lands} at $a$.
In this case, $\alpha$ is an {\it external angle }of $a$. We will refer to the unique prime end impression containing $\pimp(R_\alpha)$ as
the {\it prime end impression of } $R_\alpha$ and denote it $\imp(R_\alpha)$.
Vice versa, given a prime end $\mathbf p$, there exists a unique external ray $R_\alpha$ with $\pimp(R_\alpha)\subset \imp(\mathbf p).$
We will thus speak of the principal impression of $\mathbf p$ and write
$$\pimp(\mathbf p)=\pimp(R_\alpha).$$

We note the following theorem due to Lindel\"of Theorem (see, for example, \cite{PomUnivalent}, Theorem 9.7):
\begin{thm}
\label{lindel}
Let $\mathbf p$ be a prime end.
Then
$z\in \pimp(\mathbf p)$ if and only if there exists a fundamental chain representative  $\{\gamma_i\}$ of $\mathbf p$ such that
$$\dist(z,\gamma_i)\to 0,$$
that is, $z$ is a limit point of a sequence $\gamma_i$.
\end{thm}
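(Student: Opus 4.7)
The plan is to transfer the question to the exterior disk via $\phi^{-1}$, where the prime end $\mathbf{p}$ collapses to the single point $\zeta := e^{2\pi i\alpha}\in\uc$ under the Carath\'eodory homeomorphism. A preliminary observation I would need: for any fundamental chain $\{\gamma_i\}$ of $\mathbf{p}$, the preimage crosscuts $\tilde\gamma_i := \phi^{-1}(\gamma_i)$ are nested in $\hat\CC\setminus\overline{\DD}$ with $\diam\tilde\gamma_i\to 0$. This last fact is not automatic, and I would extract it from \thmref{lavr} applied to $\phi^{-1}$ (with base point $\infty$, using that $\dist(\gamma_i,\infty)$ stays bounded below once $\gamma_i$ concentrates near $z$). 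So the $\tilde\gamma_i$ shrink to $\zeta$.

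For the $(\Leftarrow)$ direction I expect a clean topological argument. If $\dist(z,\gamma_i)\to 0$, then for large $i$ the crosscut $\tilde\gamma_i$ separates $\zeta$ from $\infty$ inside the exterior disk, so the radial ray $\{re^{2\pi i\alpha}:r>1\}$ must pierce it at some $u_i=r_ie^{2\pi i\alpha}$ with $r_i\searrow 1$. Then $\phi(u_i)\in\gamma_i$ is within $\diam(\gamma_i)+\dist(z,\gamma_i)$ of $z$, both terms vanishing. Hence $z$ is a radial cluster value at angle $\alpha$, so $z\in\pimp(R_\alpha)=\pimp(\mathbf{p})$.

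The $(\Rightarrow)$ direction will be the substantive step, because we must manufacture a fundamental chain out of the bare data $r_n\searrow 1$ with $\phi(r_ne^{2\pi i\alpha})\to z$. My plan is to use circular crosscuts $A_\rho := \{|w-\zeta|=\rho\}\cap(\hat\CC\setminus\overline{\DD})$. The main obstacle will be that the naive choice $\rho_n=r_n-1$, which forces $A_{\rho_n}$ to pass through $r_ne^{2\pi i\alpha}$, gives no control over $\diam\phi(A_{\rho_n})$. I would sidestep this via Wolff's length--area inequality on the fixed logarithmic window $[r_n-1,\tfrac{3}{2}(r_n-1)]$: after passing to a geometrically decreasing subsequence of $r_n$ so that these windows correspond to disjoint annular sectors with total image area bounded by $\operatorname{Area}(\Omega)$, Wolff produces some $\rho_n$ in each window with
$$\length(\phi(A_{\rho_n}))^2 \;\le\; \frac{2\pi}{\log(3/2)}\,\operatorname{Area}\!\bigl(\phi(\{r_n-1\le|w-\zeta|\le\tfrac{3}{2}(r_n-1)\})\bigr)\longrightarrow 0.$$
Setting $\gamma_n:=\phi(A_{\rho_n})$ and passing to a further nested subsequence gives a fundamental chain for $\mathbf{p}$.

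It then remains to check this chain accumulates at $z$. The on-axis point $(1+\rho_n)e^{2\pi i\alpha}\in A_{\rho_n}$ sits at uniformly bounded hyperbolic distance (at most $\log(3/2)$) from $r_ne^{2\pi i\alpha}$ in $\hat\CC\setminus\overline{\DD}$, since $\rho_n/(r_n-1)\in[1,3/2]$. By Schwarz--Pick the images have the same bounded hyperbolic distance in $\Omega$; as both approach the boundary point $z$ and the hyperbolic density on $\Omega$ blows up like $1/\dist(\cdot,\partial\Omega)$ by the Koebe $1/4$ theorem, their Euclidean separation must go to zero. Hence $\phi((1+\rho_n)e^{2\pi i\alpha})\in\gamma_n$ converges to $z$, yielding $\dist(z,\gamma_n)\to 0$ and completing the plan.
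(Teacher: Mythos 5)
The paper itself does not prove this theorem — it is quoted from Pommerenke (\cite{PomUnivalent}, Theorem 9.7) — so there is no in-text argument to compare against; what you have written is a from-scratch proof of Lindel\"of's prime-end theorem. The overall plan (pass to the disc; pierce the radial ray for the easy direction; Wolff length–area plus a Schwarz–Pick bounded-step estimate for the hard direction) is the standard route to this result and is essentially sound. Two small remarks before the main issue: the preliminary observation that the $\tilde\gamma_i$ shrink to $\zeta$ already follows from the Carath\'eodory homeomorphism (the regions $N_{\tilde\gamma_i}$ form a neighbourhood basis of $\zeta$ in the closed exterior disc $\hat\CC\setminus\DD$), so Lavrentiev is more than you need there; and in $(\Rightarrow)$ one should apply the area count to the image inside a fixed bounded neighbourhood of $K$, since $\Omega\ni\infty$ has infinite area, and also check that the two endpoints of $\phi(A_{\rho_n})$ are distinct so that it is a crosscut in the paper's sense.

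The genuine gap is in the easy direction $(\Leftarrow)$. The assertion that ``for large $i$ the crosscut $\tilde\gamma_i$ separates $\zeta$ from $\infty$, so the radial ray must pierce it'' is only automatic when $\zeta$ lies in the \emph{interior} of the arc of $\uc$ bounding $N_{\tilde\gamma_i}$. With the paper's definition of a fundamental chain (no disjoint-closures requirement), nothing forbids $\zeta$ from being an endpoint of $\tilde\gamma_i$ for \emph{every} $i$ — think of internally tangent circular arcs through $\zeta$ — in which case $N_{\tilde\gamma_i}$ may lie entirely to one side of the radius at $\zeta$ and your points $u_i$ need not exist. The fix is the classical Lindel\"of asymptotic-value theorem: since each $\gamma_i$ is a crosscut, $\phi$ has an asymptotic value $w_i$ along $\tilde\gamma_i$ as it ends at $\zeta$; applying the classical Lindel\"of theorem to the bounded restriction of $\phi$ near $\zeta$ gives that $\phi$ has nontangential (in particular radial) limit $w_i$ at $\zeta$, so in fact $w_i\equiv w$ and $\pimp(\mathbf p)=\{w\}$; and since $w\in\overline{\gamma_i}$, $\diam\gamma_i\to 0$, and $\dist(z,\gamma_i)\to 0$, one concludes $z=w\in\pimp(\mathbf p)$. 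You should either add this case or normalise the fundamental chain (as Pommerenke does, via disjoint closures) so that it cannot persist for all $i$.
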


Let us pick $a<b<1/2$, such that $a$ is a non-computable lower-computable number and $b$ is a non-computable upper-computable number. Let $a_n\uparrow a$, $1/2>b_n\downarrow b$ be two computable sequences converging to $a$ and $b$ respectively.

Let $Q$ denote the square $[-1,1]\times[-1,1]$. Let $S_n\subset Q$ be a rectangle of height $2\cdot 3^{-n}$ given by
$$ S_n\equiv (-b_n,b_n)\times(3^{-n}, 3^{1-n}],$$
and let $L_n$ and $R_n$ be two sub-rectangles of $S_n$ of height $3^{-(n+1)}$ given by
$$L_n\equiv [-b_n, a_n]\times[8\cdot3^{-n-1}, 3^{1-n}],\; R_n\equiv [-a_n, b_n]\times[5\cdot3^{-n-1}, 2\cdot3^{-n}].$$

Define our domain $\Omega$ as
$$\Omega:=\left(\hat\CC\setminus Q\right)\cup\cup_{n\geq1}\left(S_n\setminus (L_n\cup R_n)\right).$$
See Figure \ref{fig-example} for an illustration of the construction.

\begin{figure}
\centerline{\includegraphics[width=\textwidth]{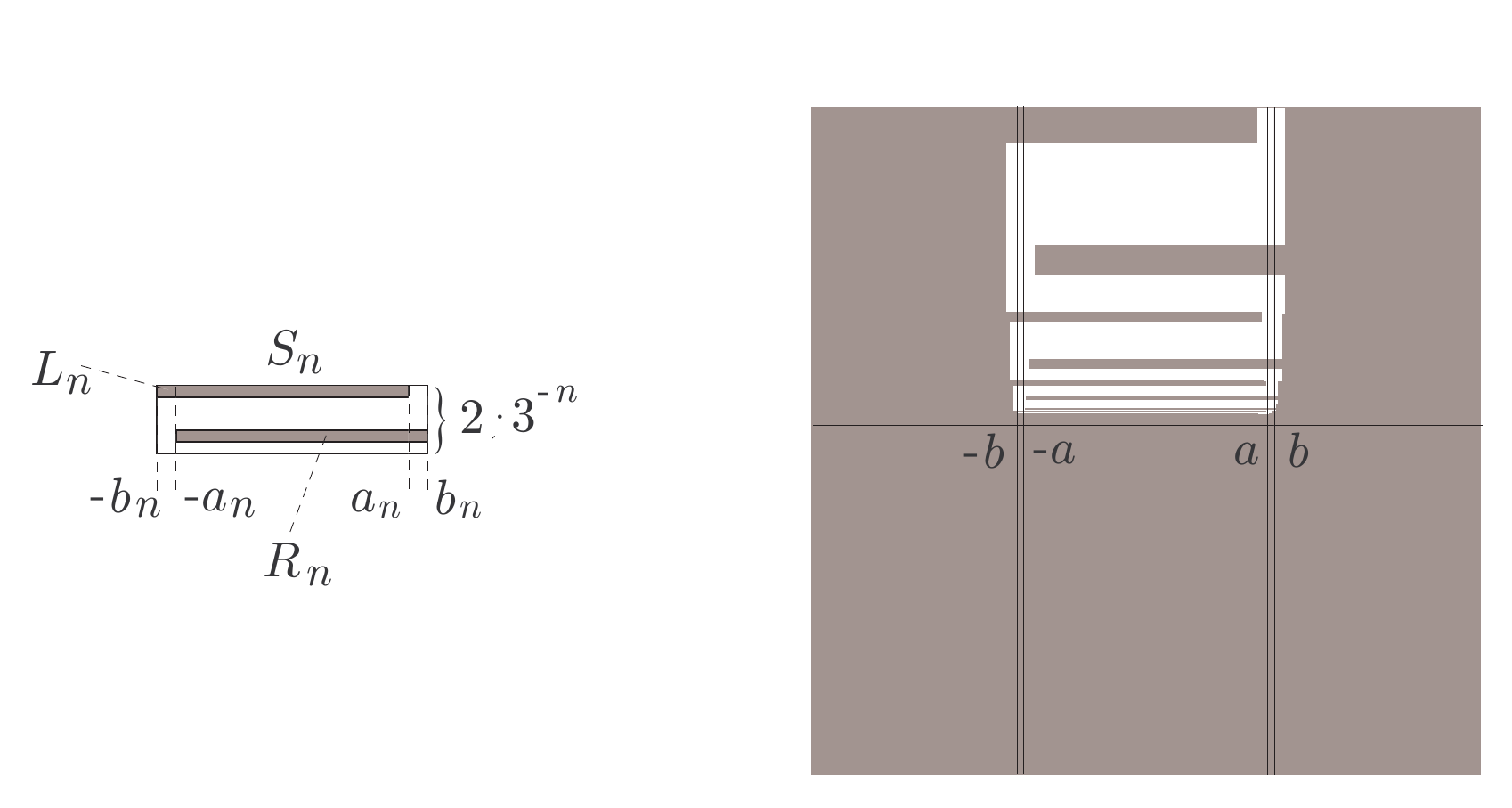}}
\caption{\label{fig-example}Left: a ``building block'' $S_n\setminus (L_n\cup R_n)$; right: the domain $\Omega$.
}
\end{figure}
Fix a basepoint $\omega\in\Omega$ outside of $Q$, and
define the prime end $\mathbf p$ of $\Omega$ by the sequence $\{\gamma_n\}$, where
$$\gamma_n={0}\times[2\cdot3^{-n}, 8\cdot3^{-n-1}]$$ is a fundamental chain of crosscuts.
It is evident that
$$\imp(\mathbf p)=[-b,b]\times\{ 0\}.$$
Furthermore, by Lindel\"of Theorem \ref{lindel},
$$\pimp(\mathbf p)=[-a,a]\times\{0\}.$$
Hence both $\imp(\mathbf p)$ and $\pimp(\mathbf p)$ are not computable.

On the other hand, $\partial \Omega$ is computable, since it
 can be approximated with  precision $3^{-n}$ in Hausdorff metric by a computable polygonal curve
$$\Gamma_n=\partial\left(\left(\hat\CC\setminus Q\right)\cup\cup_{n\geq k\geq1}\left(S_k\setminus (L_k\cup R_k)\right)\right).$$
Note that $\partial\Omega\,\setminus \, [-b,b]\times\{0\}$ can be parameterized by a computable map $f:(0,1)\to\CC$.

Moreover, the external angle $\alpha$, such that
$$\pimp(R_\alpha)=\pimp(\mathbf p)$$
is also computable. To see this, note that $\alpha_n$, the external angle of the point
$$\omega_n=(0, 7\cdot3^{-n-1})\in \Omega$$ is computable by the Constructive Riemann Mapping Theorem (see \cite{BBY3}).
Set $$\Omega_n\equiv N_{\gamma_n}.$$
By Lavrentiev's Theorem
\ref{lavr} we have, after applying a Moebious transformation so as to have $\infty \notin \Omega$, that 
$$\diam\left(\phi^{-1}(\Omega_n)\right)\leq C\sqrt{\length\gamma_n}\leq C 3^{-n/2},$$
for some computable  constant $C$.

Note that $\omega_n\in\gamma_n$. Thus the above estimate gives
$$|\alpha-\alpha_n|\leq C 3^{-n/2},$$
implying the computability of  $\alpha$.

Thus, we have produced an example of a domain with computable boundary and a computable external angle $\alpha$ for which we can not compute either $\imp(R_\alpha)$ or $\pimp(R_\alpha)$.

\section{Siegel disks in the quadratic family and computability of Julia sets}
\label{sec:intro-dyn}
As a general reference on Julia sets of rational maps we refer the reader to the excellent book of J.~Milnor \cite{Mil}.
Here we briefly review the relevant results on computability of quadratic Julia sets, following \cite{BY-book}.

We recall, that the Julia set $J_c$ of a quadratic polynomial $f_c(z)=z^2+c$ is computable if there exists an algorithm $A$,
with an oracle for $c$, which computes this set. That is, $A$ takes a single input $n\in\NN$, may query the value of $c$ with
an arbitrary finite precision, and outputs a finite set of rational points $C_n$ such that
$$\dist_H(J_c,C_n)<2^{-n}.$$
The oracle formulation separates the issue of computing the value of $c$ from the problem of computing $J_c$ when $c$ is known.
In some of the results quoted below, the value of $c$ will itself be a computable complex number, and hence, the computability of
$J_c$ will be equivalent to its computability as a compact subset of $\CC$ (without an oracle).

Let $z_0$ be a periodic point of $f_c$ with period $p$ with multiplier $\lambda\neq 0$.
We say that $f_c$ is {\it locally linearlizable at $z_0$} if there exists a neighborhood $U(z_0)$ and a conformal change of
variable $$\psi:U(z_0)\to B(0,r)$$
with the property
$$\psi\circ f_c^p\circ \psi^{-1}(z)=\lambda z.$$
In the case when $|\lambda|\neq 1,$ a local linearization always exists by a classic result of Schroeder.

In the {\it parabolic case}, when $\lambda$
is a root of unity, the map $f_c$ is not linearizable.

 The remaining possibility is $\lambda=e^{2\pi i\theta}$ with the internal angle $\theta\notin \QQ$. Here two non-vacuous possibilities  exist:
{\it Cremer case}, when $f_c$ is not linearizable, and {\it Siegel case}, when it is.
In the latter case, there exists a maximal linearization  neighborhood $U(z_0)$ which is called a {\it Siegel disk} of $f_c.$

Note that Fatou-Shishikura inequality implies that $f_c$ has no more than one non-repelling orbit. In particular, there could
be no more than one periodic Siegel disk for $f_c$.

We note (see \cite{BY-book}):
\begin{thm}
If the Julia set $J_c$ is not computable, then $f_c$ necessarily has a Siegel disk.
\end{thm}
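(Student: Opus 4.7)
The plan is to prove the contrapositive: assuming $f_c$ admits no Siegel disk at any periodic cycle, one shows $J_c$ is computable. By the Fatou--Shishikura inequality (referenced in the excerpt), $f_c$ carries at most one non-repelling cycle, so the hypothesis divides the situation into exactly four mutually exclusive cases: (a) all cycles repelling (including the case $c\notin M$ and Misiurewicz-type parameters); (b) $f_c$ has an attracting cycle (hyperbolic case); (c) $f_c$ has a parabolic cycle; (d) $f_c$ has a Cremer cycle. I would handle each case separately, using as a starting point the fact recalled in the introduction that $K_c$ is always a computable compact set.

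Cases (a) and (d) I would dispatch immediately. The Fatou--Sullivan classification says that every bounded Fatou component of a quadratic polynomial is eventually periodic, landing on an attracting basin, a parabolic basin, or a Siegel disk. Under the hypothesis of (a) or (d) none of these exist, so $K_c$ has empty interior, $J_c=K_c$, and computability follows for free.

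The hyperbolic case (b) I would handle by the standard expanding-dynamics argument: $f_c$ is uniformly expanding on a neighborhood of $J_c$, the repelling periodic points are a dense uniformly computable sequence in $J_c$, and the complement $J_c^{c}$ is a lower-computable open set (union of the basin of infinity and the preimages of the immediate attracting basin, both of which can be enumerated by forward iteration with effective Koebe-type distortion bounds). Invoking Proposition~\ref{computableset} then yields computability of $J_c$.

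The parabolic case (c) is the main obstacle, since dynamics near the parabolic cycle degenerate from exponential to polynomial convergence, so naive iteration does not give an effective modulus of approach. My plan is to isolate a small neighborhood of the parabolic periodic orbit and analyze it via Fatou coordinates on the attracting and repelling Leau petals, quantitatively controlled in terms of the parabolic multiplier (which is algebraic and hence computable). Away from this neighborhood $f_c$ is again expanding, so one proceeds as in (b); inside it, the explicit conjugation to the translation $w\mapsto w+1$ in Fatou coordinates provides an effective modulus for locating $J_c$ with prescribed precision. Combining the four cases and appealing once more to Proposition~\ref{computableset} completes the argument.
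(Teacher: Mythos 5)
The paper does not supply a proof of this theorem: it is quoted from the reference \cite{BY-book} with no argument given, so there is no in-paper proof to compare against. That said, your contrapositive outline matches the standard argument from that source. The case split is exhaustive by Fatou--Shishikura, the reduction of the Cremer and all-repelling cases to $J_c=K_c$ together with the (already-quoted) general computability of $K_c$ is exactly the right shortcut, and your treatments of the hyperbolic case (uniform expansion plus Proposition~\ref{computableset}) and the parabolic case (Fatou coordinates to effectivize the polynomial escape rate) reflect the way those cases are handled in the literature. No gaps.
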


In view of this, it will be necessary to recall some facts on the occurrence of Siegel disks in the quadratic family. For
simplicity, let us further specialize to the case of a fixed Siegel disk: we will consider the family
$$P_\theta(z)=e^{2\pi i\theta}z+z^2.$$
 The map $P_\theta$ has a neutral fixed point at the origin with multiplier $e^{2\pi i\theta}$.
If it is linearizable, we will denote the Siegel disk by $\Delta_\theta$.

For a number $\theta\in [0,1)$ denote $[r_1,r_2,\ldots,r_n,\ldots]$, $r_i\in\NN\cup\{\infty\}$ its possibly finite
continued fraction expansion:
\begin{equation}
\label{cfrac}
[r_1,r_2,\ldots,r_n,\ldots]\equiv\cfrac{1}{r_1+\cfrac{1}{r_2+\cfrac{1}{\cdots+\cfrac{1}{r_n+\cdots}}}}
\end{equation}
Such an expansion is defined uniquely if and only if $\theta\notin\QQ$. In this case, the {\it rational
convergents } $p_n/q_n=[r_1,\ldots,r_{n}]$ are the closest rational approximants of $\theta$ among the
numbers with denominators not exceeding $q_n$.

Suppose, $\theta\notin\QQ$ and inductively define $\theta_1=\theta$
and $\theta_{n+1}=\{1/\theta_n\}$. In this way,
$$\theta_n=[r_{n},r_{n+1},r_{n+2},\ldots].$$
We define the {\it Yoccoz's Brjuno function} as
$$\Phi(\theta)=\displaystyle\sum_{n=1}^{\infty}\theta_1\theta_2\cdots\theta_{n-1}\log\frac{1}{\theta_n}.$$
One can verify that $$B(\theta)<\infty\Leftrightarrow \Phi(\theta)<\infty.$$

The celebrated result of Brjuno and Yoccoz states:
\begin{thm}
The map $P_\theta$ has a Siegel point at the origin if and only if $\Phi(\theta)<\infty.$
\end{thm}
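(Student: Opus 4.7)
The plan is to prove the two implications separately, as they require quite different techniques.

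For the sufficiency direction (Brjuno's Theorem: $\Phi(\theta)<\infty$ implies linearizability), the approach is the classical formal power series construction. I would seek a conformal conjugacy
$$h(z)=z+\sum_{n\geq 2}h_n z^n$$
solving the Schr\"oder equation $h(e^{2\pi i\theta}z)=P_\theta(h(z))=e^{2\pi i\theta}h(z)+h(z)^2$. Equating coefficients yields a triangular recursion of the form
$$h_n\bigl(e^{2\pi i n\theta}-e^{2\pi i\theta}\bigr)=Q_n(h_2,\ldots,h_{n-1}),$$
where $Q_n$ is a polynomial with positive integer coefficients in the previously determined $h_j$. The small divisors $|e^{2\pi i n\theta}-e^{2\pi i\theta}|$ are controlled below by the denominators $q_k$ of the continued fraction convergents of $\theta$. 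The Brjuno condition allows one to obtain an estimate of the form $\log|h_n|\leq n(\Phi(\theta)+C)$, giving positive radius of convergence. This argument is elementary and quantitative.

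The hard part will be the necessity direction (Yoccoz's Theorem: linearizability implies $\Phi(\theta)<\infty$). My plan here is to establish a quantitative upper bound on the conformal radius $r(\theta)$ of $\Delta_\theta$—specifically, that $\log r(\theta)+\Phi(\theta)$ is bounded above by a universal constant. Once such an estimate is in place, $\Phi(\theta)=\infty$ forces $r(\theta)=0$, i.e.\ non-linearizability. To obtain this estimate, one approach would be Yoccoz's geometric renormalization: consider the sequence of rational rotation numbers $p_n/q_n$ approximating $\theta$; for each such rational number $P_{p_n/q_n}$ has a parabolic fixed point with no Siegel disk, and a careful study of the returns to a fundamental crescent produces Yoccoz's functional equation relating the size of the disk for $\theta$ to that for $\{1/\theta\}$, iterating to yield the sum $\Phi(\theta)$. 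An alternative route is through Perez-Marco's theory of hedgehogs: if the origin is non-linearizable but not surrounded by small cycles, one constructs invariant compact sets on which the dynamics yields a lower bound on $\Phi(\theta)$.

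In either case the main obstacle is the delicate geometric/combinatorial control of the return map on thin annular neighborhoods of the fixed point, which is the technical heart of Yoccoz's original argument. Given the depth and length of this part of the proof, and since the statement is only used here as a black-box input for the dynamical construction in later sections, I would in practice cite the original references \cite{Yoccoz} and only sketch the Brjuno direction explicitly.
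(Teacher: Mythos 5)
The paper does not prove this theorem at all: it is stated as a known, celebrated result and attributed to Brjuno \cite{Bru} (sufficiency, 1972) and Yoccoz \cite{Yoc} (necessity in the quadratic family, 1987). Your instinct to treat it as a black-box citation, as the paper does, is therefore exactly right, and your sketch of both directions is accurate in outline. A few remarks on the details. For the Brjuno direction, the recursion you derive is correct: writing $\lambda=e^{2\pi i\theta}$, equating coefficients in $h(\lambda z)=\lambda h(z)+h(z)^2$ gives $h_n(\lambda^n-\lambda)=Q_n(h_2,\ldots,h_{n-1})$ with $Q_n$ having nonnegative integer coefficients; the estimate $\log|h_n|\leq n(\Phi(\theta)+C)$ is the content of Brjuno's and Davie's combinatorial counting lemma, which is the nontrivial technical core of that direction and would need to be spelled out if one were not citing. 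For the necessity direction, you plan to establish an \emph{upper} bound $\log r(\theta)+\Phi(\theta)\leq C$ and let $\Phi(\theta)=\infty$ force $r(\theta)=0$; this is indeed what Yoccoz proves, and it is worth noting that the paper quotes the complementary \emph{lower} bound $\Phi(\theta)+\log r(\theta)\geq -C$ just after the theorem statement (which gives sufficiency via conformal radius). Both bounds are in \cite{Yoc}, and Buff--Ch\'eritat's continuity theorem (\thmref{phi-cont}) sharpens the pair to continuity of $\Phi(\theta)+\log r(\theta)$. Since the paper only uses the theorem as input to the later construction, your decision to cite and sketch rather than reproduce the geometric renormalization argument is the correct editorial choice.
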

We call the irrational numbers $\theta\in \uc$ with $\Phi(\theta)<\infty$ {\it Brjuno numbers}.
The sufficiency of the condition $\Phi(\theta)<\infty$ for linearizability of an arbitrary analytic germ with multiplier
$\lambda=e^{2\pi i\theta}$ was proved by Brjuno \cite{Bru} in 1972 (with a different series $B(\theta)=\sum_n\frac{\log(q_{n+1})}{q_n}$
whose convergence is equivalent to that of $\Phi(\theta)$. In 1987 Yoccoz \cite{Yoc} proved that
the condition $\Phi(\theta)<\infty$ is also necessary in the quadratic family.

It is easy to see that, for example, every Diophantine number $\theta$ satisfies the above condition, so there is a full measure
set of Siegel parameters $\theta$ in $\uc$ (the sufficiency of the Diophantine condition was proved by Siegel \cite{siegel} in 1942.

\begin{figure}[h]
\centerline{\includegraphics[width=0.5\textwidth]{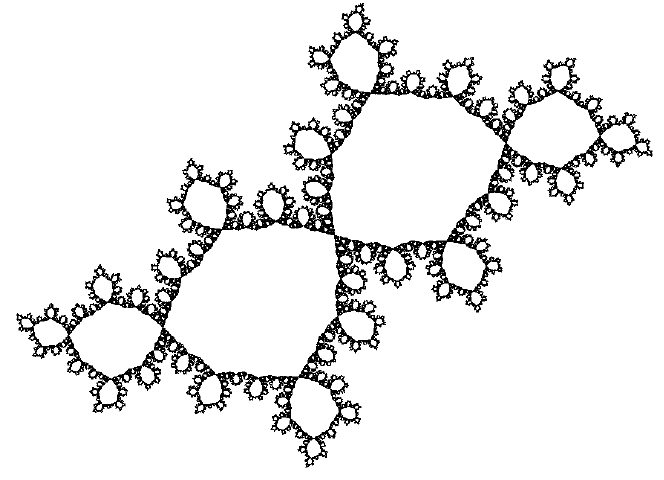}}
\caption{The  Julia set of $P_\theta$ for $\theta=[1,1,1,1,\ldots]$ (the inverse golden mean).}
\label{fig-siegel}
\end{figure}

The proof of Yoccoz's Theorem relies on the following connection between the sum of the series $\Phi$ and the size of the Siegel
disk $\Delta_\theta$.

\begin{defn}
Let $P(\theta)$ be a quadratic polynomial with a Siegel disk $\Delta_\theta\ni 0$. Consider a conformal isomorphism
$\phi:\DD\mapsto\Delta$ fixing $0$. The {\it conformal radius of the Siegel disk $\Delta_\theta$} is
the quantity
$$r(\theta)=|\phi'(0)|.$$
For all other $\theta\in[0,\infty)$ we set $r(\theta)=0$.
\end{defn}

\noindent
By the Koebe One-Quarter Theorem of classical complex analysis, the internal radius of $\Delta_\theta$ is at least
$r(\theta)/4$. Yoccoz \cite{Yoc} has shown that the sum
$$\Phi(\theta)+\log r(\theta)$$
is bounded from below independently of $\theta\in\cB$. Buff and Ch{\'e}ritat have greatly improved this result
by showing that:

\begin{thm}[\cite{BC}]
\label{phi-cont}
The function $\theta\mapsto \Phi(\theta)+\log r(\theta)$ extends to $\RR$ as a 1-periodic continuous
function.
\end{thm}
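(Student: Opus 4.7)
The statement asserts an exact cancellation: both $\Phi(\theta)$ and $-\log r(\theta)$ diverge to $+\infty$ at every non-Brjuno $\theta$, yet their algebraic sum is claimed to be continuous. Write $\Upsilon(\theta) := \Phi(\theta) + \log r(\theta)$ on the Brjuno set, which I denote $\cB$. My plan is to prove continuity of $\Upsilon$ on $\cB$ with a modulus that is uniform on each compact subinterval, and then to extend by limits to all of $\RR$.

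For upper semicontinuity on $\cB$: given $\theta_n \to \theta$ in $\cB$, extract a subsequence with $r(\theta_n) \to \rho := \limsup r(\theta_n)$. The normalized linearizers $\phi_n:\DD \to \Delta_{\theta_n}$, with $\phi_n(0)=0$ and $\phi_n'(0) = r(\theta_n)$, are univalent and hence form a normal family; any subsequential limit linearizes $P_\theta$ on $\DD$, so $r(\theta) \geq \rho$, giving upper semicontinuity of $\log r$. Combined with the continuity of each partial Brjuno sum $\Phi_N$ at every irrational and with Yoccoz's tower of polynomial-like renormalizations — which bounds $\Phi(\theta) - \Phi_N(\theta)$ by (a constant plus) the log conformal radius of the $N$-th cylinder renormalization of $P_\theta$ — a telescoping argument yields $\Upsilon(\theta) \geq \limsup \Upsilon(\theta_n)$.

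Lower semicontinuity is the main obstacle. One must match the explosion of $\Phi(\theta_n)$ against the collapse of $r(\theta_n)$ along \emph{arbitrary} approximating sequences, including ones that pass through rationals or non-Brjuno numbers. My approach would be quantitative perturbation from parabolic points: at $\theta_0 = p/q$ the map $P_{\theta_0}$ carries a parabolic fixed point with a $q$-petal flower of some definite size $s(p/q)$, and a parabolic-implosion/surgery argument should produce, for Brjuno $\theta_n$ very close to $p/q$ (equivalently, with a sufficiently large partial quotient at the corresponding level), a Siegel disk of conformal radius comparable to $s(p/q)$. The dominant term $\log(1/\theta'_n)$ in the Brjuno sum for the shifted rotation number would then match $-\log s(p/q)$ up to a bounded error. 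Iterating this matching down the continued fraction expansion of $\theta$, and invoking Yoccoz's lower bound $\Upsilon \geq -C$ on $\cB$ to control the tails of the sums, would produce lower semicontinuity.

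Once continuity on $\cB$ is established with a modulus uniform on compacta, extension to all of $\RR$ is immediate: one defines $\Upsilon(\theta)$ at a non-Brjuno $\theta$ as the (necessarily unique) limit along any sequence of Brjunos converging to $\theta$, and $1$-periodicity is inherited from the $1$-periodicity of $\Phi$ and the invariance $P_{\theta+1} \equiv P_\theta$. I expect the genuinely hard step to be the quantitative perturbation: producing a \emph{lower} estimate on the conformal radius of a Siegel disk as the rotation number is perturbed from a parabolic value into $\cB$ requires delicate near-parabolic renormalization and parabolic-implosion estimates, and this is where the substantive technical difficulty will reside.
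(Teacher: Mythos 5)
This theorem is quoted in the paper from \cite{BC} (Buff--Ch\'eritat) without any proof, so there is no internal argument to compare against; what you have written is an attempt to reconstruct the Buff--Ch\'eritat proof. Your high-level decomposition --- upper versus lower semicontinuity of $\Upsilon(\theta)=\Phi(\theta)+\log r(\theta)$, with near-parabolic perturbation doing the heavy lifting on the lower bound --- is in the right spirit, but both halves as you sketch them have genuine gaps.

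For upper semicontinuity: upper semicontinuity of $\log r$ (Carath\'eodory kernel/normality) together with lower semicontinuity of $\Phi$ does \emph{not} yield upper semicontinuity of the sum; in general an usc plus an lsc function controls nothing. If $\theta_n\to\theta$ acquires a huge new partial quotient at a deep level, $\Phi(\theta_n)$ can explode relative to $\Phi(\theta)$, and usc of $\log r$ says only that $\log r(\theta_n)$ does not jump \emph{up} --- it says nothing about whether it drops by enough to cancel. You gesture at a Yoccoz-type renormalization identity relating $\Phi-\Phi_N$ to a renormalized conformal radius, which is indeed the right mechanism, but to close the argument one must compare the $N$-th renormalizations of $P_{\theta_n}$ and $P_\theta$ with errors uniform in $n$ and $N$; that perturbative estimate is exactly the nontrivial content, and it is absent from your sketch. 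For lower semicontinuity, the sentence claiming that ``a parabolic-implosion/surgery argument should produce \dots\ a Siegel disk of conformal radius comparable to $s(p/q)$'' compresses into a single wish what is essentially the central technical achievement of \cite{BC}: uniform two-sided estimates on the conformal radius under perturbation from parabolic parameters, with errors that telescope along the continued fraction. That does not follow from classical Lavaurs/Shishikura parabolic implosion on its own. Finally, in the extension step, ``the (necessarily unique) limit'' presupposes the uniform modulus of continuity on compacta that you have not yet produced; only once that is in hand is the extension to all of $\RR$ routine. The $1$-periodicity observation is correct but trivial.
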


\noindent
We remark that the following stronger conjecture exists (see \cite{MMY}):

\medskip
\noindent
{\bf Marmi-Moussa-Yoccoz Conjecture.} \cite{MMY} {\it The function $\upsilon: \theta\mapsto \Phi(\theta)+\log r(\theta)$ is H{\"o}lder of exponent $1/2$.}

\medskip
\noindent
From the point of view of computability of $J_\theta=J(P_\theta)$ it is known \cite{BY,BY-book} that
\begin{thm}
The computability of $J_\theta$ with an oracle for $\theta$ is equivalent to the computability of the real number $r(\theta)$,
again, with an oracle for $\theta$.
\end{thm}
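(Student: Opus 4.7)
The plan is to prove the equivalence in two directions, all algorithms being uniform in an oracle for $\theta\in\RR/\ZZ$.

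\textbf{Forward direction: computability of $J_\theta$ implies computability of $r(\theta)$.}
When $\theta$ is not Brjuno, there is no Siegel disk and $r(\theta)=0$. Otherwise, identify the Siegel disk $\Delta_\theta$ as the connected component of $\mathrm{Int}(K_\theta)$ containing the fixed point $0$. Given a computable $J_\theta$, the complement $\CC\setminus J_\theta$ is lower-computable, and one can algorithmically enumerate rational pixels lying in $\Delta_\theta$ by checking that they are connected to $0$ via chains of such pixels. A constructive Riemann mapping theorem then produces the normalized conformal isomorphism $\phi:\DD\to\Delta_\theta$ with $\phi(0)=0$, $\phi'(0)>0$, from which $r(\theta)=\phi'(0)$ is output as a computable real number; upper semi-continuity of the conformal radius under Carath{\'e}odory convergence ensures the degenerate case where $\Delta_\theta$ is small or empty is handled correctly.

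\textbf{Backward direction: computability of $r(\theta)$ implies computability of $J_\theta$.}
By Proposition \ref{computableset}, it suffices to produce a uniformly computable dense sequence in $J_\theta$ together with a lower-computable open complement. The basin of infinity is always lower-computable by the escape criterion $|P_\theta^n(z)|\to\infty$. If $r(\theta)>0$, the Siegel linearization $\phi:\DD\to\Delta_\theta$ satisfies the functional equation $\phi(e^{2\pi i\theta}z)=P_\theta(\phi(z))$ with $\phi'(0)=r(\theta)$, and its Taylor coefficients are determined by the recursion
\[
c_n(e^{2\pi in\theta}-e^{2\pi i\theta})=\sum_{k+l=n,\,k,l\geq 1}c_k c_l,\qquad c_1=r(\theta),
\]
so they are computable from the oracles for $\theta$ and $r(\theta)$. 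Approximating $\partial\Delta_\theta$ by the image $\phi(\{|z|=1-\delta\})$ for a suitable $\delta>0$ and taking iterated $P_\theta$-preimages produces the required computable points dense in $J_\theta$. The case $r(\theta)=0$ is covered by the quoted theorem that non-computability of $J_\theta$ forces a Siegel disk; it is absorbed into the general algorithm by using a computable upper bound $r(\theta)<2^{-n}$ to confine $\Delta_\theta$ inside an $O(2^{-n})$-ball about $0$ via Koebe's estimates.

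\textbf{The main obstacle.} The delicate step is to control the modulus of continuity of $\phi$ at $\uc$: for the approximation of $\partial\Delta_\theta$ by $\phi(\{|z|=1-\delta\})$ to be effective, one must choose $\delta$ as a computable function of the desired Hausdorff precision, $\theta$, and $r(\theta)$. The small-divisor denominators $e^{2\pi in\theta}-e^{2\pi i\theta}$ in the recursion above make this quantitative estimate subtle and tie the argument directly to the Brjuno--Yoccoz theory and the Buff--Ch{\'e}ritat continuity of $\theta\mapsto\Phi(\theta)+\log r(\theta)$ recorded in Theorem \ref{phi-cont}; handling these small divisors uniformly in $\theta$ is the technical heart of the proof.
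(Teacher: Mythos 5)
Your high-level decomposition — lower-compute the complement (basin of infinity plus Siegel-disk preimages) and produce a computable dense sequence in $J_\theta$, then invoke Proposition~\ref{computableset} — is the right one, and the recursion for the Taylor coefficients of the linearizer is correct. But the backward direction, as you have written it, contains a genuine gap.

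The proposed dense sequence does not lie in $J_\theta$. You take points on $\phi(\{|z|=1-\delta\})$, which is a curve \emph{inside} the Siegel disk $\Delta_\theta$, and then take iterated $P_\theta$-preimages. Those preimages lie in $\bigcup_n P_\theta^{-n}(\Delta_\theta)$, i.e.\ in the \emph{interior} of $K_\theta$, which is disjoint from $J_\theta$. They accumulate on $J_\theta$ but never land on it, so condition $(i)$ of Proposition~\ref{computableset} (a computable sequence $x_n\in J_\theta$) is not met. The fix is standard and much simpler: take the repelling fixed point $\beta=1-e^{2\pi i\theta}$ (or any repelling periodic point) and enumerate its iterated preimages under $P_\theta$. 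These are roots of explicit polynomials with coefficients computable from the oracle for $\theta$, they lie in $J_\theta$, and they are dense there — this uses only the oracle for $\theta$, not $r(\theta)$ at all.

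Once you make that replacement, the ``main obstacle'' you identify — a computable modulus of continuity for $\phi$ up to $\uc$, tied to Buff--Ch\'eritat continuity — evaporates. The oracle for $r(\theta)$ is needed only to \emph{lower}-compute $\Delta_\theta$, i.e.\ to enumerate compact subsets $\phi(\overline{B(0,1-\delta)})$ exhausting it from within, and for that one needs only to estimate $\phi$ on compacta of $\DD$ (via de Branges-type tail bounds $|c_n|\le n\,r(\theta)$); no boundary regularity enters. This matters beyond elegance: for many Brjuno $\theta$ the boundary $\partial\Delta_\theta$ is not locally connected, so $\phi$ admits no continuous extension to $\uc$ and no choice of $\delta$ makes $\phi(\{|z|=1-\delta\})$ a Hausdorff approximation of $\partial\Delta_\theta$. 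Your route therefore cannot be repaired by quantitative small-divisor estimates; it must be abandoned in favor of the preimage construction. Two smaller points: the claim that a bound $r(\theta)<2^{-n}$ ``confines $\Delta_\theta$ inside an $O(2^{-n})$-ball via Koebe'' is false — Koebe controls the inner radius, not the diameter, and a Siegel disk of tiny conformal radius can have large diameter — though this does not break the algorithm, since lower-computation of a possibly empty set never requires confirming anything; and the forward direction, while broadly plausible, leans on a constructive Riemann mapping theorem without saying what computable description of $\Delta_\theta$ is fed into it — in the reference the quantitative Lemma~\ref{variation conf radius} is what converts a Hausdorff approximation of $J_\theta$ into a numerical estimate of $r(\theta)$, and that step should be spelled out.
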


The following oracle-less theorem of \cite{BY-MMJ} (see also \cite{BY-book}) implies that there exist {\it computable}
parameters $\theta$ with non-computable $J_\theta$. Let us denote
$$r_*=\displaystyle\sup_{\theta\in \uc} r(\theta).$$
\begin{thm}
\label{nc julia}
Suppose $\theta$ is a computable real number. Then $r(\theta)$ is right-computable.

Conversely, let $r$ be a right computable real number in the interval $(0,r_*)$. Then there exists a computable
parameter $\theta\in \uc$ such that $r(\theta)=r$. Moreover, the value of $\theta$ can be computed uniformly (by an
explicit algorithm) from a computable sequence $r_n\searrow r$.
\end{thm}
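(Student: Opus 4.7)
Both halves of the theorem rest on Theorem~\ref{phi-cont}, which supplies the continuity of $\upsilon(\theta) := \Phi(\theta) + \log r(\theta)$. My first step in the forward direction is to upgrade Theorem~\ref{phi-cont} to a computability statement: that $\upsilon$ is a computable real-valued function of $\theta$. Classical continuity does not automatically give computability, but an effective modulus of continuity can be extracted by quantifying the Buff--Ch\'eritat argument (or alternatively by restricting attention to a computably dense set of Diophantine $\theta$ of bounded type, on which $r(\theta)$ is directly computable via the classical Siegel linearization combined with Koebe distortion control on the linearizing germ). Granting this, the forward direction is short: for computable irrational $\theta$ the continued-fraction expansion is uniformly computable, so the non-negative series defining $\Phi(\theta)$ has uniformly computable partial sums and $\Phi(\theta)$ is lower-computable. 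Then $\log r(\theta) = \upsilon(\theta) - \Phi(\theta)$ is computable minus lower-computable, i.e.\ upper-computable; exponentiation preserves this, so $r(\theta)$ is right-computable. (Rational $\theta$ is trivial since $r=0$.)

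For the converse, given a computable sequence $r_n \searrow r \in (0, r_*)$, I would build $\theta = [a_1, a_2, \ldots]$ by choosing partial quotients in stages. The key mechanism is that, fixing any prefix $a_1, \ldots, a_{k-1}$ and letting $a_k \to \infty$, one drives $\Phi(\theta) \to \infty$ and hence $r(\theta) \to 0$; between these extremes the continuity of $\upsilon$ lets us hit a target value of $r$ in a controlled interval. At stage $n$, using the forward direction as a subroutine, I search effectively for the smallest $a_n$ such that the committed segment $[a_1, \ldots, a_n]$ extended by all $1$'s yields a $\theta^{(n)}$ with $r(\theta^{(n)})$ within $2^{-n}$ of $r_n$. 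Iterating produces a computable Cauchy sequence of rationals converging to a computable $\theta$, and the continuity of $\upsilon$ together with $r_n \to r$ forces $r(\theta) = r$. Crucially, the asymmetry of the construction---we can push $r(\theta)$ downward by enlarging any future partial quotient, but never upward past what the current prefix permits---matches the one-sided nature of right-computability exactly, which is why the hypothesis cannot be weakened to left-computable $r$.

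The main obstacle is the effectivization underlying the forward direction. Buff--Ch\'eritat's continuity is soft and does not immediately yield the computable modulus needed to pass from pointwise approximability of $\upsilon$ to algorithmic evaluation. Were the Marmi--Moussa--Yoccoz conjecture available, the H\"older-$\tfrac12$ modulus would suffice trivially; lacking this, one must produce an explicit but possibly weaker modulus by quantifying the Yoccoz renormalization and a priori bounds machinery. A secondary technical point in the converse is to confirm that the search at stage $n$ terminates in finite time: this follows from the monotone dependence of $r$ on $a_n$ combined with the intermediate value theorem applied to $\upsilon$ along the one-parameter family of tails, guaranteeing that the target level $r_n$ is hit for some finite $a_n$ within an explicitly bounded range.
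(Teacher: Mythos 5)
The paper does not actually prove Theorem~\ref{nc julia}; it is quoted from \cite{BY-MMJ} and \cite{BY-book}, so there is no internal proof to compare with. Evaluated on its own terms, your proposal has a genuine gap in the forward direction. The decomposition $\log r(\theta)=\upsilon(\theta)-\Phi(\theta)$ is natural, but to conclude upper-computability of $\log r$ you would need $\upsilon$ to be a \emph{computable} function, and this is not available. Theorem~\ref{phi-cont} gives only continuity, and your fallback (computability of $\upsilon$ on the dense set of bounded-type angles, via \thmref{comp-bded}) does not help: continuity plus computability on a dense set does not yield an algorithm at a general computable $\theta$ unless one also has a computable modulus of continuity, which is exactly what is missing. (The Marmi--Moussa--Yoccoz conjecture would supply such a modulus, but it is open, and you cannot assume it.) The known proof of right-computability of $r(\theta)$ avoids $\upsilon$ entirely: from a computable $\theta$ one produces a computable nested sequence of simply-connected domains $V_n\supset\Delta_\theta$ (e.g.\ from escape-time approximations of $K_{c(\theta)}$) whose Carath\'eodory kernel at $0$ is $\Delta_\theta$; by the constructive Riemann mapping theorem the numbers $r(V_n,0)$ are uniformly computable, and by the Carath\'eodory Kernel Theorem and Schwarz they decrease to $r(\theta)$.

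Your converse direction is in the right spirit---it is close to the iterative construction of Lemma~\ref{ind1} in this paper---but needs two repairs. First, the subroutine should be \thmref{comp-bded} (uniform computability of $r(\theta)$ for bounded-type $\theta$ given a type bound), not ``the forward direction as a subroutine''; otherwise the gap above propagates. Second, ``intermediate value theorem applied to $\upsilon$'' is misplaced: the partial quotient $a_n$ is an integer, so there is no continuous one-parameter family and no IVT to apply. What one actually uses is that sending $a_n\to\infty$ forces $\Phi\to\infty$, hence $\log r\to-\infty$ by \thmref{phi-cont}, so $r$ eventually drops below any target; termination of the search then comes from this effective upper bound together with \thmref{comp-bded}, not from monotonicity. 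The final identification $r(\theta)=r$ is correct and follows as you say from $r_n\searrow r$, $\Phi(\theta^{(n)})\to\Phi(\theta)$ (the first $n$ Brjuno terms agree and $\beta_n\to 0$), and continuity of $\upsilon$. Your observation about why right-computability is the correct one-sided hypothesis is sound.
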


Let us make a few notes on topological properties of Siegel Julia sets.
Firstly, the Julia set of any Siegel or Cremer quadratic polynomial is connected.
The following result is due to Sullivan and Douady (see \cite{Sul}):

\begin{thm}
\label{loc con}
If the Julia set of a polynomial mapping $f$ is locally connected, then
$f$ has no Cremer points.
Moreover, every cycle of Siegel disks of $f$ contains at least one
critical point in its boundary.

\end{thm}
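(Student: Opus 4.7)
The plan is to deduce both assertions from the continuous extension of the B\"ottcher coordinate to $J_f$. Since $J_f=\partial K_f$ is locally connected, the Carath\'eodory Theorem applied to the basin of infinity $\hat\CC\setminus K_f$ implies that the B\"ottcher map $\phi_f:\hat\CC\setminus\overline{\DD}\to\hat\CC\setminus K_f$ extends to a continuous surjection from the closed complement onto $\hat\CC\setminus\text{int}(K_f)$. Consequently, every external ray $R_\alpha$ lands at a unique point $\gamma(\alpha)\in J_f$, the landing map $\gamma:\uc\to J_f$ is continuous and surjective, and it semiconjugates the angle-multiplying map $m_d:\alpha\mapsto d\alpha\pmod 1$ to $f|_{J_f}$, where $d=\deg f$. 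With the landing map available, both parts reduce to standard combinatorial-dynamical arguments.

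\emph{Ruling out Cremer points.} Suppose, toward a contradiction, that $z_0$ is a periodic point of period $p$ with multiplier $e^{2\pi i\theta}$, $\theta\notin\QQ$. The fibre $A(z_0):=\gamma^{-1}(z_0)$ is non-empty by surjectivity of $\gamma$, setwise invariant under $m_d^p$, and finite (only finitely many distinct accesses to $z_0$ from the basin of infinity can exist). Hence every $\alpha\in A(z_0)$ is $m_d^p$-periodic, in particular rational, and $f^p$ permutes $A(z_0)$ cyclically. This permutation has a well-defined combinatorial rotation number $\rho\in\QQ$, inherited from the circular order of $A(z_0)\subset\uc$. The key step is to identify $\rho$ with the local rotation number $\theta\pmod 1$ of $f^p$ at $z_0$; this rests on Naishul's theorem on the topological invariance of the rotation number at a neutral planar fixed point. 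The contradiction $\rho\in\QQ$ vs.\ $\theta\notin\QQ$ then shows $z_0$ cannot be a Cremer point.

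\emph{Critical points in Siegel boundaries.} Let $\{\Delta_0,\ldots,\Delta_{p-1}\}$ be a cycle of Siegel disks of $f$ and assume, for contradiction, that $\bigcup_i\partial\Delta_i$ contains no critical point of $f$. Choose the linearization $\psi:\DD\to\Delta_0$ conjugating $f^p|_{\Delta_0}$ to the rotation $R_\theta(z)=e^{2\pi i\theta}z$. Since $\partial\Delta_0$ is a connected subset of the locally connected set $J_f$, $\psi$ extends continuously to $\overline{\DD}$ and semiconjugates $R_\theta|_{\uc}$ to $f^p|_{\partial\Delta_0}$. The no-critical-point hypothesis means $f^p$ is a local biholomorphism in an open neighbourhood $U$ of $\partial\Delta_0$, so well-defined inverse branches of $f^p$ exist over $U$. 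These branches permit an analytic continuation of $\psi$ across $\uc$: on each sufficiently small arc $I\subset\uc$, set $\tilde\psi(z):=(f^p|_U)^{-1}\bigl(\psi(R_\theta(z))\bigr)$ for the appropriate inverse branch. Compatibility on overlaps follows from the rotation dynamics on $\uc$ together with the identity principle, yielding a conformal linearization on some $\DD_{1+\epsilon}$ and contradicting maximality of $\Delta_0$ as a linearization domain.

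\emph{Main obstacle.} The technical crux is the extension step for the Siegel case: the local inverse branches of $f^p$ must be assembled into a single-valued conformal map on a full annular neighbourhood of $\uc$, and here local connectivity of $J_f$ is used crucially to ensure $\partial\Delta_0$ is a continuous loop on which the rotation $R_\theta$ can be transported. The Cremer part is comparatively short once the classical ingredients---finiteness of $A(z_0)$ and Naishul's theorem---are in place.
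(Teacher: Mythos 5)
The paper does not supply a proof of this theorem; it is stated with a citation to Sullivan and Douady, so there is no internal argument to compare against. Your overall plan --- invoke Carath\'eodory to get the continuous landing map $\gamma:\uc\to J_f$ semiconjugating angle-multiplication to $f$, then treat the two parts separately --- is the standard one, and the structural outline is sound. But both branches contain gaps that would need to be repaired.

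In the Cremer branch, the appeal to Naishul's theorem is misplaced. Naishul's theorem concerns two germs of planar homeomorphisms related by a \emph{local topological conjugacy}; it asserts that the rotation number (the argument of the multiplier) is preserved under such a conjugacy. You have not constructed any topological conjugacy between $f^p$ near $z_0$ and a rigid rotation by $\rho$. What you actually have is a finite $m_d^p$-invariant set $A(z_0)\subset\uc$ of rational angles and the corresponding cyclically permuted family of landing rays; this is a combinatorial datum, not a conjugacy, and Naishul's theorem does not convert it into the equality $\rho=\theta\bmod 1$. The correct --- and shorter --- route is to observe that since every $\alpha\in A(z_0)$ is $m_d$-periodic, the ray $R_\alpha$ is a path invariant under some iterate $f^{pq}$ and landing at the $f^{pq}$-fixed point $z_0$; the Snail Lemma (Milnor, Lemma 16.2) then forces $(f^{pq})'(z_0)=1$, i.e.\ $e^{2\pi i q\theta}=1$, contradicting $\theta\notin\QQ$. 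Put differently, periodic rays land only at repelling or parabolic points, which cannot be Cremer.

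In the Siegel branch, the idea of extending the linearization by pulling back through inverse branches of $f^p$ is the right one, and you are right to flag assembling the local branches as the crux; but the proposal does not actually close that gap. To obtain a single-valued branch of $(f^p)^{-1}$ on an annular neighbourhood $U$ of $\partial\Delta_0$ you need to know that $\partial\Delta_0$ is a Jordan curve, not merely the continuous image of a loop: only then is $U$ an annulus on which the absence of critical points makes $f^p$ an (unbranched) covering, whose degree can be read off to be $1$ from the biholomorphism $f^p|_{\Delta_0}$, giving the global inverse. Local connectivity of $J_f$ gives you that $\partial\Delta_0$ is locally connected and hence that $\psi$ extends continuously to $\overline\DD$, but it does not by itself rule out identifications $\psi(a)=\psi(b)$ on $\uc$ (i.e.\ cut points of $\partial\Delta_0$); that $\partial\Delta_0$ is in fact a Jordan curve in the locally connected case is a genuine further step, and without it the monodromy/compatibility claim ``follows from the rotation dynamics and the identity principle'' is not established. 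You should either prove the Jordan-curve property directly (using invariance of the identification set under $R_\theta$ and a plane-topology argument, or the laminational model of a locally connected Julia set), or reorganize the argument to avoid it. Finally, once the holomorphic extension $\tilde\psi$ on $\DD_{1+\eps}$ is produced, you should also verify that it remains univalent before invoking maximality of the Siegel disk; this is routine but should be stated.
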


\noindent
Thus, in particular, Cremer quadratic Julia sets are {\it never} locally connected. There is a vast
amount of recent work on pathological properties of Cremer quadratics, and we will not attempt to give
a survey of results here.
We cannot offer an illustration
with a Cremer Julia set to the reader -- even though it is known that all such sets are computable,
no informative pictures of them have been produced to this day.

As for Siegel Julia sets the following results are known.
We say that an irrational number
$\theta=[r_0,r_1,r_2,\ldots]$ is of {\it a type bounded by $B$} if $\sup r_i\leq B<\infty$.
The collection of all angles $\theta$ of a bounded type  is the
Diophantine class with exponent $2$.
 Petersen \cite{Pet} showed that $J_\theta$ is locally connected for $\theta$
of  a bounded type.
A different proof of this was later given by the third author \cite{Yam}. Petersen and
Zakeri \cite{PZ} further extended this result to a set of angles $\theta$ which has a full measure in $\TT$.
In \cite{BY-lc} it was shown that there exist computable parameters $\theta$ for which the Julia set $J_\theta$
is locally connected, and yet not computable (see also \cite{BY-book} for an expository account).

On the other hand, Herman in 1986 presented first examples of $P_\theta$ with a Siegel disk whose boundary
does not contain any critical points. By \thmref{loc con} the Julia set of such a map is not locally-connected.
In the papers of Buff-Ch{\'e}ritat \cite{BC2}, and Avila-Buff-Ch{\'e}ritat \cite{ABC} it is shown that the boundary
$\partial \Delta_\theta$ of the Siegel disk itself can be $C^\infty$ smooth. A variation of their argument will be used
in this paper. Note, that if the boundary of $\Delta_\theta$ is a  smooth curve (differentiability at every point is
enough), then it cannot contain the critical point, and hence $J_\theta$ cannot be locally connected.

For future reference let us state several facts on the dependence of the conformal radius of a Siegel disk on the parameter
(details can be found in \cite{BY-book}).

\begin{defn}
Let $(U_n,u_n)$ be a sequence of topological disks $U_n\subset\CC$ with marked points $u_n\in U_n$.
The {\it kernel} or {\it Carath{\'e}odory} convergence $(U_n,u_n)\to (U,u)$ means the following:
\begin{itemize}
\item $u_n\to u$;
\item for any compact $K\subset U$ and for all $n$ sufficiently large, $K\subset U_n$;
\item for any open connected set $W\ni u$, if $W\subset U_n$ for infinitely many $n$, then $W\subset U$.
\end{itemize}
\end{defn}

\noindent
The topology on the set of pointed domains which corresponds to the above definition of convergence is again
called {\it kernel} or {\it Carath{\'e}odory} topology. The meaning of this topology is as follows.
For a pointed domain $(U,u)$ denote
$$\phi_{(U,u)}:\DD\to U$$
the unique conformal isomorphism with $\phi_{(U,u)}(0)=u$, and $(\phi_{(U,u)})'(0)>0$.
We again denote $r(U,u)=|(\phi_{(U,u)})'(0)|$ the conformal radius of $U$ with respect to $u$.

By the Riemann Mapping
Theorem, the correspondence $$\iota:(U,u)\mapsto \phi_{(U,u)}$$
establishes a bijection between marked topological disks properly contained in $\CC$ and univalent maps $\phi:\DD\to\CC$
with $\phi'(0)>0$.
The following theorem is due to Carath{\'e}odory, a proof may be found in  \cite{Pom}:

\begin{thm}[{\bf Carath{\'e}odory Kernel Theorem}]
The mapping $\iota$ is a homeomorphism with respect to the Carath{\'e}odory topology on domains and the
compact-open topology on maps.
\end{thm}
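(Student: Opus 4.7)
The plan is to verify that $\iota$ is a continuous bijection with continuous inverse between the two topologies. Bijectivity is given by the Riemann mapping theorem (each normalized univalent $\phi:\DD\to\CC$ with $\phi'(0)>0$ corresponds uniquely to the pointed domain $(\phi(\DD),\phi(0))$), so only the two continuity statements need be proved. The main tools are Montel's normal families theorem, Hurwitz's theorem on limits of univalent maps, the Schwarz lemma together with Koebe's one-quarter and distortion theorems to control conformal radii from both sides, and the uniqueness clause of the Riemann mapping theorem.

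For continuity of $\iota$, assume $(U_n,u_n)\to(U,u)$ in the Carath{\'e}odory sense and set $\phi_n:=\phi_{(U_n,u_n)}$, $r_n:=\phi_n'(0)$. The first step is to show $\{r_n\}$ is bounded both above and away from $0$. Choosing $\rho>0$ with $B(u,2\rho)\subset U$, the second kernel condition forces $\overline{B(u,\rho)}\subset U_n$ for large $n$, so $B(u_n,\rho/2)\subset U_n$; the Schwarz lemma applied to $z\mapsto\phi_n^{-1}(u_n+(\rho/2)z)$ then gives $r_n\geq\rho/2$. Conversely, if $r_{n_k}\to\infty$ along a subsequence, Koebe's one-quarter theorem yields $U_{n_k}\supset B(u_{n_k},r_{n_k}/4)$, so every ball $B(u,R)$ lies in $U_{n_k}$ for large $k$, and the third kernel condition forces $B(u,R)\subset U$ for every $R$, contradicting $U\subsetneq\CC$. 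With $r_n$ controlled, Koebe's distortion theorem makes $\{\phi_n\}$ uniformly bounded on every compact subset of $\DD$, so the family is normal. Any subsequential limit $\psi$ has $\psi'(0)>0$, hence is univalent by Hurwitz. A Rouch{\'e}-type argument on closed sub-disks of $\DD$ combined with the third kernel condition shows $\psi(\DD)\subset U$; the reverse inclusion $U\subset\psi(\DD)$ follows from a Schwarz subordination argument applied to the composition $\phi_n^{-1}\circ\phi$, which is defined on compact sub-disks of $\DD$ for large $n$ by the second kernel condition and yields $|\psi'(0)|\geq|\phi'(0)|$ in the limit. Thus $\psi$ is the normalized Riemann map onto $U$, so $\psi=\phi_{(U,u)}$, and since every subsequential limit agrees, the entire sequence converges.

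For continuity of $\iota^{-1}$, suppose $\phi_n\to\phi$ uniformly on compacts and set $u_n:=\phi_n(0)$, $U_n:=\phi_n(\DD)$, $u:=\phi(0)$, $U:=\phi(\DD)$. The first kernel condition $u_n\to u$ is immediate. For the second, given a compact $K\subset U$ pick $r<1$ with $\phi^{-1}(K)\subset B(0,r)$; Rouch{\'e}'s theorem applied to $\phi-w_0$ and $\phi_n-w_0$ on $\partial B(0,(1+r)/2)$, uniformly over $w_0\in K$ by compactness and uniform convergence, gives $K\subset\phi_n(B(0,(1+r)/2))\subset U_n$ for large $n$. For the third, let $W\ni u$ be open and connected with $W\subset U_{n_k}$ along a subsequence; the holomorphic maps $\phi_{n_k}^{-1}:W\to\DD$ form a normal family by Montel, and we extract a sub-subsequential limit $\chi:W\to\overline{\DD}$. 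Since $(\phi_{n_k}^{-1})'(u_{n_k})=1/\phi_{n_k}'(0)\to 1/\phi'(0)\neq 0$, we have $\chi'(u)\neq 0$, so $\chi$ is nonconstant and the open mapping principle yields $\chi(W)\subset\DD$. Passing to the limit in $\phi_{n_k}\circ\phi_{n_k}^{-1}=\mathrm{id}_W$ (using uniform convergence of $\phi_n$ on a compact neighborhood of each $\chi(w)$ inside $\DD$) gives $\phi\circ\chi=\mathrm{id}_W$, hence $W=\phi(\chi(W))\subset\phi(\DD)=U$.

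The principal technical obstacles are twofold: first, the upper bound on $r_n$ in the forward direction, which genuinely requires the third kernel condition and cannot be obtained from the second alone; and second, ensuring that limits of the inverse maps $\phi_{n_k}^{-1}$ in the reverse direction do not degenerate to boundary-valued constants, which is where the nondegeneracy $\phi'(0)\neq 0$ plays an essential role via the derivative identity $(\phi_{n_k}^{-1})'(u_{n_k})=1/\phi_{n_k}'(0)$. Once these two points are handled, the remaining steps are routine normal family reasoning combined with Hurwitz and the Schwarz lemma.
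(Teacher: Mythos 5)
The paper does not prove the Carath\'eodory Kernel Theorem; it quotes it as a classical result and refers the reader to Pommerenke for a proof, so there is no in-paper argument to compare against. Your sketch is essentially the standard textbook proof of this theorem (as found in Pommerenke, Duren, or Conway): bijectivity from the Riemann Mapping Theorem, forward continuity via two-sided control of $\phi_n'(0)$ (Schwarz and second kernel condition for the lower bound; Koebe one-quarter, third kernel condition, and $U\subsetneq\CC$ for the upper bound), normality plus Hurwitz to produce a univalent subsequential limit, a Rouch\'e argument and the third kernel condition for $\psi(\DD)\subset U$, a Schwarz--Pick subordination estimate on $\phi_n^{-1}\circ\phi$ for $U\subset\psi(\DD)$, and the inverse direction by Rouch\'e and a normal-family argument on $\phi_{n_k}^{-1}$ whose nondegeneracy you correctly pin on $\phi'(0)\neq 0$. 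The only step you gloss over slightly is that $(\phi_n^{-1}\circ\phi)(0)=\phi_n^{-1}(u)$ is not exactly $0$ but only tends to $0$ (since $u_n\to u$ and the $\phi_n^{-1}$ are equicontinuous near $u$ by Koebe distortion, given the two-sided bounds on $\phi_n'(0)$ you already established), so the Schwarz bound must be applied in its Schwarz--Pick form and the error term sent to zero; this is a routine adjustment, and the proof is correct as written. You also implicitly use that the compact-open topology is metrizable and the Carath\'eodory topology is defined by its convergent sequences, so that sequential bicontinuity suffices for the homeomorphism claim; this matches how the paper sets up the definition.
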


\noindent
\begin{prop}
\label{radius-continuous}
The conformal radius of a quadratic Siegel disk varies continuously with respect to the Hausdorff
distance on Julia sets.
\end{prop}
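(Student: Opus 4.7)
The plan is to reduce the claim to the Carath\'eodory Kernel Theorem. Suppose $J_{\theta_n}\to J_\theta$ in Hausdorff distance, where $P_\theta(z)=e^{2\pi i\theta}z+z^2$ has a Siegel disk $\Delta_\theta$ at $0$ (the degenerate case $r(\theta)=0$ is treated at the end). Since $0\in\Delta_\theta$ lies in the interior of the Fatou set, $\dist(0,J_\theta)>0$, hence $\dist(0,J_{\theta_n})>0$ for $n$ large; because $0$ is a neutral fixed point of every $P_{\theta_n}$ in this family, this forces $P_{\theta_n}$ to be linearizable at $0$, so $\Delta_{\theta_n}$ exists for all large $n$. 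Let $\psi_n:\DD\to\Delta_{\theta_n}$ and $\psi:\DD\to\Delta_\theta$ be the Riemann uniformizations normalized by $\psi_n(0)=\psi(0)=0$ and $\psi_n'(0),\psi'(0)>0$. I plan to establish Carath\'eodory convergence $(\Delta_{\theta_n},0)\to(\Delta_\theta,0)$; the Kernel Theorem then yields compact-open convergence $\psi_n\to\psi$, and by Cauchy's integral formula $r(\theta_n)=\psi_n'(0)\to\psi'(0)=r(\theta)$.

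For the non-trivial half of Carath\'eodory convergence, let $K\subset\Delta_\theta$ be compact, which I may assume is connected and contains $0$. The central dynamical input is that $P_\theta|_{\Delta_\theta}$ is conformally conjugate to an irrational rotation, so the forward orbit of any compact subset of $\Delta_\theta$ stays in a compact subset of $\Delta_\theta$. In particular,
\[
K^*\;:=\;\overline{\bigcup_{k\ge 0} P_\theta^k(K)}
\]
is compact, connected, forward-invariant under $P_\theta$, contains $0$, and is contained in $\Delta_\theta$, so $\dist(K^*,J_\theta)>0$. Hausdorff convergence gives $\dist(K^*,J_{\theta_n})>0$ for large $n$, placing $K^*$ in the Fatou set of $P_{\theta_n}$; being connected and containing $0$, $K^*$ must lie in the Fatou component of $P_{\theta_n}$ about $0$, namely $\Delta_{\theta_n}$. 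Hence $K\subset\Delta_{\theta_n}$ eventually.

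For the easy half, suppose $W$ is open and connected with $0\in W\subset\Delta_{\theta_n}$ for infinitely many $n$, so $W\cap J_{\theta_n}=\emptyset$ along this subsequence. If some $w\in W$ belonged to $J_\theta$, Hausdorff lower semicontinuity would produce $w_n\in J_{\theta_n}$ with $w_n\to w$; since $W$ is open, $w_n\in W$ eventually, a contradiction. So $W$ misses $J_\theta$, and by connectedness $W\subset\Delta_\theta$. This completes the proof when $r(\theta)>0$. If instead $r(\theta)=0$, the neutral fixed point $0$ lies in $J_\theta$, and Hausdorff convergence supplies $p_n\in J_{\theta_n}$ with $p_n\to 0$; since $p_n\notin\Delta_{\theta_n}$, the Koebe one-quarter theorem forces $r(\theta_n)/4\le|p_n|\to 0$.

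The main technical point is the construction of the forward-invariant hull $K^*$. Hausdorff convergence of Julia sets supplies information ``from outside'' the Siegel disks, whereas Carath\'eodory convergence of the $\Delta_{\theta_n}$ demands control ``from inside''; the rotational dynamics on $\Delta_\theta$ is precisely what bridges these two descriptions.
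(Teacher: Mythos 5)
Your proof is correct and uses exactly the Carath\'eodory kernel machinery that the paper sets up immediately before stating the proposition (Definition of kernel convergence plus the Carath\'eodory Kernel Theorem); the paper defers the details to \cite{BY-book}, where the argument follows the same route. The forward-invariant hull $K^*$ built from the rotational dynamics on $\Delta_\theta$ is the right bridge from Hausdorff control of the Julia sets to kernel convergence of the Siegel disks, and the Koebe one-quarter estimate correctly handles the degenerate case $r(\theta)=0$.
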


For a pointed domain $(U,u)$ denote $\rho(U,u)$ the {\it inner radius}
$\rho(U,u)=\dist(u,\partial U)$.
\begin{lem}
\label{variation conf radius}
Let $U$ be a simply-connected  bounded subdomain of $\CC$ containing the point $0$ in the interior.
Suppose $V\subset U$ is a simply-connected subdomain of $U$, and $\partial V\subset U_{\eps}(\partial U)$.
Then
$$r(U,0)-r(V,0)\leq 4\sqrt{r(U,0)}\sqrt{\eps}.$$
Moreover, denote $F(x)=4x/(1+x)^2.$ Then
$$r(V,0)\leq r(U,0)F\left(\frac{\rho(V,0)}{\rho(U,0)} \right).$$

\end{lem}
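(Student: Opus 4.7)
I plan to work with the normalized Riemann maps $\phi_U, \phi_V: \DD \to U, V$ (with $\phi(0)=0$, $\phi'(0)>0$) and the univalent self-map
$$\tau := \phi_U^{-1} \circ \phi_V: \DD \to \DD, \qquad \tau(0)=0, \quad |\tau'(0)|=r(V,0)/r(U,0),$$
whose image $V' := \phi_U^{-1}(V) \subset \DD$ is simply connected and contains $0$.

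For the first inequality the goal is to show that $\partial V'$ lies very near $\partial\DD$. Combining the lower Koebe distortion bound for $\phi_U$ with the Koebe $1/4$ theorem produces the standard estimate
$$\dist\bigl(\phi_U(w),\partial U\bigr) \geq \frac{r(U,0)(1-|w|)^2}{16}.$$
For any $\zeta \in \partial\DD$ viewed as a Carath\'eodory prime end, $\phi_U(\tau(\zeta)) = \phi_V(\zeta)$ lies in $\partial V$, so the hypothesis $\partial V \subset U_\eps(\partial U)$ forces $(1-|\tau(\zeta)|)^2 \leq 16\eps/r(U,0)$. Thus every $w \in \partial V'$ satisfies $|w| \geq 1 - 4\sqrt{\eps/r(U,0)}$; simple-connectedness of $V'$ then gives $V' \supset B(0,\,1 - 4\sqrt{\eps/r(U,0)})$. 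Since $r(V',0) \geq \dist(0,\partial V')$, multiplying through by $r(U,0)$ yields $r(U,0)-r(V,0) \leq 4\sqrt{r(U,0)\,\eps}$.

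For the second inequality I first transfer the inner-radius ratio through $\phi_U$. The auxiliary self-map $\Psi(z) := \phi_U^{-1}(\rho(U,0)\,z)$ is well-defined on $\DD$ (since $B(0,\rho(U,0)) \subset U$) and fixes $0$, so Schwarz' lemma gives $|\Psi(z)| \leq |z|$. Applied at $z_0 = p_\star/\rho(U,0)$, where $p_\star \in \partial V$ realizes $|p_\star|=\rho(V,0)$ (the degenerate case $\rho(V,0)=\rho(U,0)$ reduces directly to $r(V,0)\leq r(U,0)=r(U,0)F(1)$), this yields $|\phi_U^{-1}(p_\star)| \leq \rho(V,0)/\rho(U,0)$, and since $\phi_U^{-1}(p_\star)\in\partial V'$,
$$\rho(V',0) \leq \rho(V,0)/\rho(U,0).$$

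Finally, I invoke the classical Pick inequality for univalent self-maps of $\DD$: if $\tau:\DD\to\DD$ is univalent with $\tau(0)=0$ and omits some $c\in\DD$, then $|\tau'(0)| \leq F(|c|) = 4|c|/(1+|c|)^2$, with equality realized by the radially slit disk $\DD \setminus \{tc/|c|: t\in[|c|,1)\}$. Choosing $c \in \partial V' \cap \DD$ with $|c| = \rho(V',0)$ and using the monotonicity of $F$ on $[0,1]$, we conclude
$$r(V,0)/r(U,0) = |\tau'(0)| \leq F(\rho(V',0)) \leq F(\rho(V,0)/\rho(U,0)).$$
The main obstacle is invoking Pick's inequality properly: the explicit computation of the conformal radius of the slit disk (through the chain disk $\to$ upper half-disk $\to \DD\setminus[-1,0]$ via a M\"obius factor and a square root) is routine, but the reduction of a general univalent $\tau$ to this extremal shape is classical yet nonelementary, typically proceeding by subordination together with a circular symmetrization argument.
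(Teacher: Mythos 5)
Your proof is correct. Note first that the paper itself does not prove this lemma; it appears in the list of facts preceded by ``details can be found in \cite{BY-book},'' so a direct comparison with the paper's argument is not possible. That said, the two ingredients you use are exactly the standard ones.

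For the first inequality: the sharp Koebe distance estimate $\dist(f(z),\partial f(\DD))\geq\tfrac14(1-|z|^2)|f'(z)|$ combined with the lower distortion bound $|f'(z)|\geq |f'(0)|\tfrac{1-|z|}{(1+|z|)^3}$ does indeed give the constant $16$ you wrote (the cruder $\tfrac14(1-|z|)|f'(z)|$ only gives $32$, so using $1-|z|^2$ is the right move). The remaining steps — that every $w\in\partial V'\cap\DD$ has $|w|\geq 1-4\sqrt{\eps/r(U,0)}$, that connectedness then gives $B(0,1-4\sqrt{\eps/r(U,0)})\subset V'$, and that $r(V,0)=r(U,0)\,r(V',0)\geq r(U,0)\,\rho(V',0)$ — are all correct; one could add a line noting that the degenerate case $4\sqrt{\eps/r(U,0)}\geq 1$ makes the target inequality trivial since then $4\sqrt{r(U,0)\eps}\geq r(U,0)\geq r(U,0)-r(V,0)$.

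For the second inequality: the Schwarz-lemma transfer $\rho(V',0)\leq\rho(V,0)/\rho(U,0)$ is clean, and the appeal to the classical extremal result for univalent self-maps of $\DD$ omitting a point at distance $d$ — namely $|\tau'(0)|\leq 4d/(1+d)^2$, with the radially slit disk extremal (whose conformal radius is exactly $4d/(1+d)^2$, as a short chain of maps disk $\to$ half-plane $\to$ squaring $\to$ square root $\to$ disk confirms) — is correct, together with the monotonicity of $F$ on $[0,1]$. You rightly flag that establishing the extremality of the slit disk is the one non-elementary input; this is a standard consequence of circular symmetrization (or of the Grötzsch module inequality), and citing it is appropriate here. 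No gaps; this is a complete and sound proof of both inequalities.
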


\begin{prop}
\label{r doest drop}
Let $\{\theta_i\}$ be a sequence of Brjuno numbers such that $\theta_i\to\theta$ and
$\overline{\lim}\; r(\theta_i)=l>0$. Then $\theta$ is also a Brjuno number and $r(\theta)\geq l$.
\end{prop}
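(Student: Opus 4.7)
\textbf{Proof plan for Proposition \ref{r doest drop}.} The key input is the Buff--Chéritat continuity statement (\thmref{phi-cont}): the function $\upsilon(\theta):=\Phi(\theta)+\log r(\theta)$ extends to a continuous $1$-periodic function on $\RR$. The strategy is to compare $\Phi(\theta_i)$ with $\Phi(\theta)$ via $\upsilon$.

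First, pass to a subsequence (still denoted $\theta_i$) with $r(\theta_i)\to l$. By continuity of $\upsilon$, $\upsilon(\theta_i)\to\upsilon(\theta)$, so
\[
\Phi(\theta_i)=\upsilon(\theta_i)-\log r(\theta_i)\longrightarrow\upsilon(\theta)-\log l=:M<\infty.
\]
In particular, the sequence $\Phi(\theta_i)$ is bounded.

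Second, I would show that $\theta$ must be irrational. If $\theta=p/q\in\QQ$, then for any irrational sequence $\theta_i\to\theta$, one checks from the definition \eqref{cfrac} that the continued fraction of $\theta_i$ must eventually read $[r_1,\ldots,r_k,N_i,\ldots]$ with $N_i\to\infty$, where $[r_1,\ldots,r_k]=p/q$. The $(k{+}1)$-st term of $\Phi(\theta_i)$ is then bounded below by $c\log N_i$ (where $c=\theta_1\cdots\theta_k$ depends only on $p/q$), forcing $\Phi(\theta_i)\to\infty$. This contradicts the boundedness established above, so $\theta\notin\QQ$.

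Third, I would invoke lower semicontinuity of $\Phi$ at the irrational point $\theta$. Each partial sum
\[
\Phi_N(\theta)=\sum_{n=1}^{N}\theta_1\theta_2\cdots\theta_{n-1}\log\frac{1}{\theta_n}
\]
is a continuous function of $\theta$ on a neighborhood of any irrational, since the first $N$ iterates of the Gauss map vary continuously at irrationals. Because all summands are non-negative, $\Phi=\sup_N\Phi_N$ is lower semicontinuous on the irrationals. Hence
\[
\Phi(\theta)\leq\liminf_{i\to\infty}\Phi(\theta_i)=M<\infty,
\]
so $\theta$ is a Brjuno number. Finally, rearranging $\upsilon(\theta)=\Phi(\theta)+\log r(\theta)$ and using $\Phi(\theta)\leq M=\upsilon(\theta)-\log l$ gives $\log r(\theta)\geq\log l$, i.e.\ $r(\theta)\geq l$, as required.

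The only delicate points are the two claims about $\Phi$ (logarithmic blow-up at rationals and lower semicontinuity at irrationals); both follow quickly from the stability of the initial segment of the continued fraction expansion under small perturbations of an irrational number, together with positivity of the summands in $\Phi$. I would expect the blow-up argument at rationals to be the main technical step, but it is standard in the Brjuno--Yoccoz literature.
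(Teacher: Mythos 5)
Your proof is correct, and the three ingredients it rests on (Buff--Ch\'eritat continuity of $\upsilon=\Phi+\log r$, logarithmic blow-up of $\Phi$ at rationals, and lower semicontinuity of $\Phi$ as a supremum of its non-negative partial sums, each continuous at irrationals) are all genuine facts that fit together as you describe. The paper itself does not prove this proposition---it cites \cite{BY-book}---and the argument suggested by the surrounding material (Definition 4.1 of kernel convergence and the Carath\'eodory Kernel Theorem, which are introduced precisely for this block of statements) is a softer normal-families argument: pass to a subsequence with $r(\theta_i)\to l$; the normalized linearizing maps $\phi_{\theta_i}:B(0,r(\theta_i))\to\Delta_{\theta_i}$, with $\phi_{\theta_i}(0)=0$ and $\phi_{\theta_i}'(0)=1$, take values in a fixed bounded region, so a further subsequence converges locally uniformly on $B(0,l)$ to a univalent $\phi$; passing the functional equation $\phi_{\theta_i}(e^{2\pi i\theta_i}z)=P_{\theta_i}(\phi_{\theta_i}(z))$ to the limit shows $\phi$ linearizes $P_\theta$ on $B(0,l)$, so $P_\theta$ has a Siegel disk of conformal radius at least $l$, and by Yoccoz's theorem $\theta$ is a Brjuno number. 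Your route replaces this elementary compactness step by the far deeper \thmref{phi-cont}. That is a genuinely different proof; it is slicker, and the continuity of $\upsilon$ (rather than just Yoccoz's two-sided bound $|\upsilon|\leq C$, which would only give $r(\theta)\geq l\,e^{-2C}$) is exactly what you need to recover the sharp inequality $r(\theta)\geq l$, but it is considerably heavier machinery than the proposition requires. One small imprecision worth noting: at $\theta=p/q$ the continued-fraction prefix of a nearby irrational depends on the side of approach, since $p/q$ has the two representations $[r_1,\ldots,r_k]=[r_1,\ldots,r_k-1,1]$; in either case some early partial quotient tends to infinity with a uniformly bounded-below coefficient $\theta_1\cdots\theta_{m-1}$, so the blow-up of $\Phi$ still holds and your conclusion is unaffected, but the statement ``must eventually read $[r_1,\ldots,r_k,N_i,\ldots]$'' should be phrased to allow both prefixes.
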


Let us note for future reference:
\begin{thm}[\cite{BY-book}]
\label{comp-bded}
There exists an algorithm $A$ with an oracle for $\theta$ which, given $\theta$ of a type bounded by $B$ and the value of $B$
uniformly computes $r(\theta)$.
\end{thm}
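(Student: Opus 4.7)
Since $r(\theta)$ is right-computable for any computable $\theta$ by the first half of Theorem \ref{nc julia}, whose proof is uniform in the oracle for $\theta$, we obtain at once a computable sequence $u_n \searrow r(\theta)$. The remaining task is to exhibit matching left-computable lower bounds $\ell_n \nearrow r(\theta)$ at a rate depending only on $B$.

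The plan is to build a uniformly computable nested sequence of simply-connected subdomains $V_k \Subset \Delta_\theta$ containing $0$, with the property that $\partial V_k$ lies within the $\epsilon_k$-neighborhood of $\partial \Delta_\theta$ for an effective $\epsilon_k \to 0$ depending only on $B$. Granted such a sequence, Lemma \ref{variation conf radius} yields the estimate
\[
r(\theta) - r(V_k, 0) \leq 4\sqrt{r(\theta)}\sqrt{\epsilon_k},
\]
and each $r(V_k, 0)$ is computable from a polygonal approximation of $\partial V_k$ via the constructive Riemann Mapping Theorem of \cite{BBY3}. Combined with the upper bounds $u_n$, this completes the algorithm: on input precision $n$, run the two computations in parallel until $u_N - r(V_k, 0) < 2^{-n}$ and output the midpoint.

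The construction of the $V_k$ rests on the quantitative theory of bounded-type Siegel disks developed in \cite{Yam} and \cite{BY-book}: by Petersen's theorem $\partial \Delta_\theta$ is a quasi-circle passing through the critical point of $P_\theta$, with dilatation bounded in terms of $B$ alone. A concrete realization of $V_k$ is via effective truncations of the Yoccoz linearizer $\psi$ conjugating $P_\theta$ to the rotation $z \mapsto e^{2\pi i \theta} z$ on $\Delta_\theta$; its power-series coefficients are uniformly computable because the bounded-type hypothesis forces effective lower bounds on the small denominators $|e^{2\pi i m \theta} - 1|$ in terms of $B$ and $m$. The principal obstacle is to certify that the resulting $\epsilon_k$ tends to $0$ at a rate that can be made explicit uniformly in $\theta$ for fixed $B$: this is exactly the content of the quantitative Petersen-Yampolsky theory, and once it is in hand the function $k(n, B)$ prescribing how far to push the truncation is itself computable, yielding the required uniform algorithm.
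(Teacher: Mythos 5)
The statement you were asked to prove is cited by the paper from \cite{BY-book} without an in-paper proof, so there is no internal argument to compare against; I will assess your proposal on its own terms.

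Your high-level architecture is sound and is surely close to the right one: upper bounds $u_n\searrow r(\theta)$ from right-computability, lower bounds from computing $r(V_k,0)$ on inner approximating subdomains $V_k\Subset\Delta_\theta$, and \lemref{variation conf radius} to certify the error. The gap is in how you propose to actually produce the domains $V_k$. The linearizer $\psi$ has radius of convergence \emph{exactly} $r(\theta)$, so to write $V_k=\psi(B(0,\rho_k))$ with $\partial V_k$ effectively close to $\partial\Delta_\theta$ you would need $\rho_k\nearrow r(\theta)$ --- i.e.\ you would already need the lower approximation you are trying to construct. Merely computing the Taylor coefficients $a_m$ does not break this circularity: the functional $(a_m)\mapsto \limsup|a_m|^{1/m}$ is not computable, and the small-denominator bounds only give a Siegel-type a priori lower bound on the radius of convergence, not convergence to $r(\theta)$. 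Note also that \thmref{comp-bded1}, which you would use to evaluate the linearizer, takes as an \emph{input} a number $r<r(\theta)$; so it presupposes the lower bound rather than producing it. You flag this yourself as ``the principal obstacle'' and attribute it to quantitative Petersen--Yampolsky theory, but that theory does not certify power-series truncations; invoking it here is a placeholder rather than a step.

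The standard way to produce the $V_k$ in the bounded-type case goes through the critical orbit, not the linearizer. For type bounded by $B$, $\partial\Delta_\theta$ is the closure of the critical orbit $\{P_\theta^j(c_0)\}_{j\ge 0}$ (Petersen, \cite{Pet}), and the conjugacy $\partial\Delta_\theta\to S^1$ is quasisymmetric, hence H\"older, with constants depending only on $B$. Consequently the first $q_n$ points of the critical orbit form an $\epsilon_n$-net of $\partial\Delta_\theta$ with $\epsilon_n\to 0$ at an explicit rate determined by $B$ (and a universal diameter bound on $\Delta_\theta$). Since these finitely many points are uniformly computable from the oracle for $\theta$, one can take $V_n$ to be the connected component of $0$ in the complement of $\bigcup_{j<q_n}\overline{B(P_\theta^j(c_0),\epsilon_n)}$; this is compactly contained in $\Delta_\theta$, its boundary lies within $2\epsilon_n$ of $\partial\Delta_\theta$, and now \lemref{variation conf radius} and the constructive Riemann Mapping Theorem apply exactly as you describe. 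With this substitution your argument closes; as written, the key intermediate object is not constructed.
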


\begin{thm}[\cite{BBY3}]
\label{comp-bded1}
There exists an algorithm $A$ with an oracle for $\theta$ which, given $\theta$ of a type bounded by $B$, the value of $B$, and $r\in(0,r(\theta))$
uniformly computes the linearizing map $\phi_\theta$ on the disk $B(0,r)$.
\end{thm}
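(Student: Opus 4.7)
The plan is to recover $\phi_\theta$ from its Taylor expansion at the origin, computing the coefficients via the recursion implied by the linearization equation. Normalize so that $\phi_\theta:B(0,r(\theta))\to\Delta_\theta$ satisfies $\phi_\theta(0)=0$, $\phi_\theta'(0)=1$, and $\phi_\theta(e^{2\pi i\theta}w)=P_\theta(\phi_\theta(w))$, and write $\phi_\theta(w) = w + \sum_{n\geq 2} a_n w^n$. Equating Taylor coefficients in this functional equation yields
$$a_n\bigl(e^{2\pi i n\theta}-e^{2\pi i\theta}\bigr) = \sum_{\substack{j+k=n\\ j,k\geq 1}} a_j a_k,\qquad n\geq 2,$$
with $a_1=1$. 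Thus the coefficients are determined inductively by arithmetic in the quantities $e^{2\pi i k\theta}$.

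The bounded-type hypothesis is exactly what makes this recursion effectively evaluable. For $\theta$ of type bounded by $B$ one has the Diophantine estimate $\|m\theta\|\geq c(B)/m$ with $c(B)>0$ computable from $B$, so $|e^{2\pi i n\theta}-e^{2\pi i\theta}| = 2|\sin\pi(n-1)\theta|\geq c'(B)/n$. Given the oracle for $\theta$ and the value of $B$, I can approximate each small divisor with arbitrary relative precision and carry out the recursion step by step. The outcome is that every $a_n$ is uniformly computable from the oracle for $\theta$, the value of $B$, and the index $n$.

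To convert the coefficients into an effective approximation of $\phi_\theta$ on $B(0,r)$, I first invoke \thmref{comp-bded} to compute $r(\theta)$ from $\theta$ and $B$. Since $r<r(\theta)$ is given as input, by comparing successive approximations to $r$ and $r(\theta)$ I can algorithmically produce a computable $r'$ with $r<r'<r(\theta)$ together with a strictly positive computable lower bound on $r'-r$. Because $\phi_\theta(B(0,r'))\subset\Delta_\theta\subset K_\theta\subset\{|z|\leq 2\}$ (the last containment is the standard escape criterion for $P_\theta$), Cauchy's estimates on $|w|=r'$ give $|a_n|\leq 2/(r')^n$, so on $B(0,r)$
$$\Bigl|\phi_\theta(w) - \sum_{n=1}^{N} a_n w^n\Bigr| \;\leq\; \frac{2(r/r')^{N+1}}{1 - r/r'}.$$
Given a target precision $\eps$, I choose $N=N(r,r',\eps)$ making the tail at most $\eps/2$, compute every $a_n$ with $n\leq N$ to precision $\eps/(2Nr^N)$, and sum the truncated polynomial. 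For a general query point $w\in B(0,r)$ given by an oracle, the same procedure applies once one has located $|w|<r$ quantitatively, which is possible exactly because $|w|<r$ is guaranteed.

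The main technical obstacle is the precision bookkeeping in the recursive computation of the $a_n$: each division by a small divisor of size $c'(B)/n$ amplifies errors by a factor of order $n/c'(B)$, so recovering $a_N$ to precision $\eta$ requires maintaining working precision of order $\eta\cdot\prod_{n\leq N}n/c'(B)$ throughout. The precision loss is tame (at worst exponential in $N$) and perfectly manageable on a Turing machine, but the algorithm must propagate the required precision carefully through every level of the recursion. All remaining ingredients -- the functional equation for $\phi_\theta$, the small-divisor bound for bounded type, and the Cauchy tail estimate made effective by \thmref{comp-bded} -- are standard and assemble into a single explicit algorithm taking $\theta$ (via oracle), $B$, and $r$ as inputs.
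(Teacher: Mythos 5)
The paper attributes this theorem to \cite{BBY3} and does not reproduce a proof, so there is no in-paper argument to compare against; the reference \cite{BBY3} is the same source the paper credits with the Constructive Riemann Mapping Theorem, which suggests the original proof proceeds geometrically by computing $\partial\Delta_\theta$ (from computability of $J_\theta$ for bounded type) and then applying the constructive RMT to the domain $\Delta_\theta$. Your argument is a genuinely different, self-contained alternative: you recover $\phi_\theta$ from the Schr\"oder recursion on Taylor coefficients, use the bounded-type Diophantine bound $\|m\theta\|\geq c(B)/m$ to make the small divisors effectively invertible, and obtain the a priori bound $|a_n|\leq 2/(r')^n$ from the escape criterion $K_\theta\subset\overline{B(0,2)}$ together with \thmref{comp-bded}, which supplies the needed computable $r'$ strictly between $r$ and $r(\theta)$. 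This route avoids constructive Riemann-mapping machinery entirely, at the cost of explicit precision propagation through the recursion, which you correctly identify as the main technical burden. One small slip in the bookkeeping: requiring each $a_n$, $n\leq N$, to precision $\eps/(2Nr^N)$ controls the accumulated error $\sum_{n\leq N}\delta\,r^n$ by $\eps/2$ only when $r\geq 1$; for $r<1$ the geometric sum does not behave like $Nr^N$, and you should instead take $\delta_n=\eps/(2N r^n)$ or $\delta=\eps/\bigl(2\sum_{n\leq N}r^n\bigr)$. This is a cosmetic fix and does not affect the soundness of the argument.
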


\section{Computation of external angles in the Mandelbrot set}
Suppose that $f_c$ has a fixed point at the origin with multiplier $\lambda=e^{2\pi i \theta}$ for some rational number $\theta=p/q\neq 0$.
Then $c$ lies in the boundary of the main hyperbolic component of the Mandelbrot set $\cM$, and there are exactly two extental
rays of $\cM$ landing at $c$. Let us denote $\alpha$ one of the external angles of $c$.
As described in \cite{milnor-orm}, the angle $\alpha$ is
periodic under the angle doubling map
$$D:x\mapsto 2x \mod 1$$
with period $q$. Furthermore, denote 
$$\cO\equiv (\alpha_0,\alpha_1,\ldots,\alpha_{q-1})$$
the points of the orbit of $\alpha_0=\alpha$ under $D$, enumerated in the cyclic order on $\RR/\ZZ$.
Then 
$$D(\alpha_i)=\alpha_{i+p\mod q},$$
that is, the combinatorial rotation number of the orbit of $\alpha$ on $S^1$ is equal to $p/q$.

As shown in \cite{milnor-orm}, for every combinatorial rotation number $p/q$ there exists a unique $q$-periodic orbit $\cO_{p/q}$
 of $D$ which realizes it.
Let us label the external angles of $c$ by $\alpha_-(p/q)$, $\alpha_+(p/q)$ in the cyclic ordering on $\RR/\ZZ$.
These angles are uniquely determined as elements of $\cO_{p/q}$ such that
\begin{equation}\label{critgap}
\dist(\alpha_-(p/q)-\alpha_+(p/q))=\min\dist(\beta,\gamma)\text{ for }\beta,\gamma\in\cO_{p/q},\;\gamma\neq \beta
\end{equation}
with respect to the Euclidean distance on $\RR/\ZZ$.

\begin{figure}[ht]
\centerline{\includegraphics[width=\textwidth]{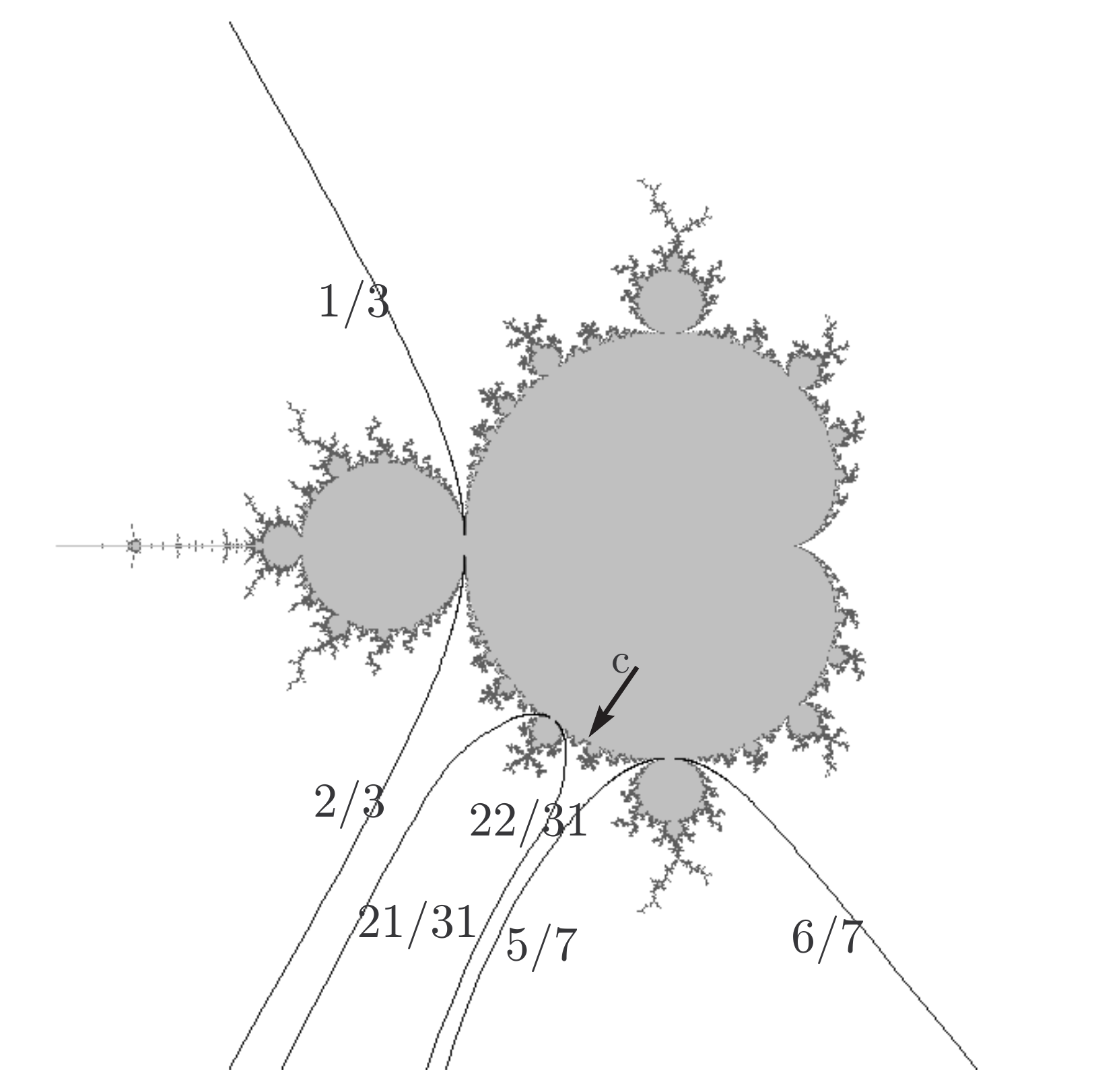}}
\caption{\label{fig-mandel}Approximating the external angle of the parameter $c\in\partial\cM$ for which $f_c$ has a fixed point
with multiplier $\lambda=e^{2\pi i \theta}$ and internal angle $\theta=[1,1,1,\ldots]$ (golden-mean Siegel parameter). The first rational approximants of $\theta$ are:
$\theta_1=1/2$ ($\alpha_-(1/2)=1/3$, $\alpha_+(1/2)=2/3$), $\theta_2=2/3$ ($\alpha_-(2/3)=5/7$, $\alpha_+(2/3)=6/7$) and 
$\theta_3=3/5$ ($\alpha_-(3/5)=21/31$, $\alpha_+(3/5)=22/31$).
}
\end{figure}

Let us now formulate:
\begin{thm}
\label{thm-douady}
Let $f_c$ have a fixed point at the origin with multiplier $\lambda=e^{2\pi i\theta}\in \uc.$ Then the external argument(s)
of $c$ in the Mandelbrot set $M$ is(are) uniformly computable from the continued fraction of $\theta$.
\end{thm}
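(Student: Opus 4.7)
The proof naturally splits into the rational and irrational cases, distinguished by the finiteness of the continued fraction expansion of $\theta$.

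\emph{Rational case $\theta = p/q$.} The two external angles $\alpha_\pm(p/q)$ are, by the characterization \eqref{critgap} and the combinatorial discussion preceding the theorem, the pair of points at minimum cyclic distance in the unique $q$-periodic $D$-orbit $\cO_{p/q}$ of combinatorial rotation number $p/q$. Since the $q$-periodic points of $D$ are exactly the rationals $k/(2^q-1)$ with $k=0,\ldots,2^q-2$, the following finite procedure computes $\alpha_\pm(p/q)$ exactly: enumerate these rationals; partition them into $D$-orbits and retain those of length exactly $q$; for each such orbit, determine its combinatorial rotation number from the permutation $D$ induces on the cyclic ordering of its elements; select the unique orbit of rotation number $p/q$; and extract the pair of elements realizing the minimum cyclic gap.

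\emph{Irrational case.} From the partial quotients $r_1,r_2,\ldots$ we compute the convergents $p_n/q_n$ of $\theta$, and via the rational-case algorithm the angles $\alpha_n := \alpha_-(p_n/q_n)$ and $\beta_n := \alpha_+(p_n/q_n)$. Since $\cO_{p_n/q_n}$ consists of $q_n$ points on the unit circle, \eqref{critgap} gives the a priori bound
$$|\beta_n - \alpha_n|_{\RR/\ZZ} \leq 1/q_n,$$
which shrinks at least Fibonacci-fast in $n$. Thus $\{\alpha_n\}$ is a Cauchy sequence with an explicit rate, and the algorithm is: given precision $2^{-k}$, compute convergents until $1/q_n < 2^{-k}$ and output $\alpha_n$.

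The remaining content is to identify $\lim \alpha_n$ with an external argument of $c(\theta)$. This is classical Douady--Hubbard theory (cf.\ \cite{milnor-orm}): the parameter rays $R_{\alpha_\pm(p/q)}$ bound the wake $W_{p/q}$ rooted at the parabolic parameter $c(p/q)$, and as $p_n/q_n \to \theta$ the roots satisfy $c(p_n/q_n) \to c(\theta)$ while the angular widths $|\beta_n - \alpha_n|$ tend to zero, forcing the rays $R_{\alpha_n}$ to accumulate on the prime end of $\hat\CC\setminus\cM$ accessed by $c(\theta)$. The combinatorial underpinning is that $\cO_{p_n/q_n}$ converges in Hausdorff distance to the unique minimal $D$-invariant Cantor set $K_\theta \subset \RR/\ZZ$ on which $D$ is semi-conjugate to rotation by $\theta$, with the pair $(\alpha_n, \beta_n)$ converging to the endpoints of the gap of $K_\theta$ which collapses to a point for irrational $\theta$. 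The main obstacle in the argument is this identification of $\lim \alpha_n$ with the correct external angle of $c(\theta)$; the computability aspects themselves are then routine, since the a priori error bound $1/q_n$ is provided directly by \eqref{critgap}.
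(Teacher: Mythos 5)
Your rational case is identical to the paper's: brute-force enumeration of the $q$-periodic points $k/(2^q-1)$, grouping into orbits, selecting the one with rotation number $p/q$, and extracting the minimal gap via \eqref{critgap}.

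In the irrational case there is a genuine gap. From \eqref{critgap} you correctly obtain $|\beta_n-\alpha_n|\le 1/q_n$, but the inference ``Thus $\{\alpha_n\}$ is a Cauchy sequence with an explicit rate'' is a non-sequitur: the bound controls the \emph{width} of the $n$-th wake, not the distance from $\alpha_n$ to $\alpha(\theta)$, and $\alpha(\theta)$ lies \emph{outside} the interval $[\alpha_n,\beta_n]$ (the wake of $p_n/q_n$ contains the $p_n/q_n$-limb, not the parameter $c(\theta)$). Thus the error $|\alpha_n-\alpha(\theta)|$ is governed by the gap between consecutive wakes together with the wake widths, and you have supplied no bound on the former. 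Consequently the proposed algorithm (``compute until $1/q_n<2^{-k}$, output $\alpha_n$'') is not justified as stated: without an a priori rate, termination at that threshold does not certify the advertised precision. Your subsequent wake/Douady--Hubbard discussion does correctly establish the \emph{convergence} $\alpha_n\to\alpha(\theta)$, but convergence alone is not an algorithm.

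The paper sidesteps the need for an a priori rate entirely. It exploits the alternation of the convergents $p_{n-1}/q_{n-1}$ and $p_n/q_n$ around $\theta$ and the monotonicity of external angle along the cardioid boundary, which together sandwich $\alpha(\theta)$ between $\alpha_-(p_{n-1}/q_{n-1})$ and $\alpha_-(p_n/q_n)$. The algorithm therefore uses a \emph{stopping criterion}: compute $\alpha_-(p_n/q_n)$ until two consecutive values agree to within $2^{-n}$; the sandwiching then guarantees $|\alpha_-(p_n/q_n)-\alpha(\theta)|<2^{-n}$, and termination is guaranteed by the convergence you do establish. To repair your argument you would either need to prove the gap between consecutive wake boundaries is itself $O(1/q_n)$ (which requires a further combinatorial input you have not provided), or switch to the sandwich-plus-stopping-criterion device.
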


\begin{proof}
In the case when $\theta=p/q$ is rational, as seen from the above discussion, the angles $\alpha_\pm(\theta)$ belong to the
unique periodic orbit $\cO_{p/q}$ of $D$ with combinatorial rotation number $p/q$. A $q$-periodic point of $D$ in $\RR/\ZZ$ has the form
$k/(2^q-1)$. Thus, we can find the orbit $\cO_{p/q}$  by a finite brute force search, and identify the angles $\alpha_\pm(\theta)$
using (\ref{critgap}). 

Now let $\theta$ be irrational. In this case, the
external angle of $\theta$ is unique, we denote it $\alpha(\theta)$.

 Denote $p_n/q_n$ the continued fraction convergents of $\theta$, and set
$c_n=e^{2\pi ip_n/q_n}$. 
The algorithm $A$ works by computing $\alpha_-(p_n/q_n)$ until
$$|\alpha_-(p_n/q_n)-\alpha_-(p_{n-1}/q_{n-1})|<2^{-n}.$$
It then follows that 
$$|\alpha_-(p_n/q_n)-\alpha(\theta)|<2^{-n}.$$
Indeed, the values $c_n$ and $c_{n-1}$ cut out a boundary arc $L_n$ of the main hyperbolic component of $\cM$ which contains
$c$; and the external angles of all points in $L_n$ are no more than $2^{-n}$ apart.
We illustrate the above approximation process in Figure \ref{fig-mandel}.
\end{proof}

\section{Proof of the Main Theorem}
\subsection{Definition of the Cantor set $C_\theta$}
For $\theta\in \uc \setminus \Q$ let us denote $$c(\theta)=\lambda/2-\lambda^2/4,\text{ where }\lambda=e^{2\pi i\theta},$$
so that $P_\theta(z)=e^{2\pi i\theta}z+z^2$ is conjugate to $f_{c(\theta)}$ by the affine map $z\mapsto z+\lambda/2$.
The parameter $c(\theta)$ lies on the boundary of the main component of the Mandelbrot set. Let us denote $\alpha_\theta$ its
external argument in $M$. As before, denote $D$ the doubling map $x\mapsto 2x\mod 1$.
Let 
$$D^{-1}(\alpha_\theta)=\{\gamma_\theta,\gamma_\theta+\frac{1}{2}\},$$
and denote $L_\theta$ the closed half-circle $\TT=\RR/\ZZ$  with endpoints $\gamma_\theta$, $\gamma_\theta+\frac{1}{2}$, which 
contains $\alpha_\theta$.
\begin{defn}
We denote $C_\theta$ the Cantor set of angles $\alpha$ with the property 
$$D^n(\alpha)\in L_\theta\text{ for }n=0,1,2,\ldots.$$
\end{defn}
The following is known \cite{bullsent94}:
\begin{prop}
\label{properties C}
\begin{enumerate}
\item The dynamics of $D$ is transitive on $C_\theta$.
\item $D$ is injective and preserves circular orientation on $C_\theta$,
 except at the endpoints of $L_\theta$ which are mapped onto a single point $\alpha_\theta$.
 \item The set $C_{\theta}$ consists of the closure of the recurrent point $\alpha_{\theta}$.
\item The map which collapses all gaps of $C_\theta$ semi-conjugates the dynamics of $D$ to that of the rotation of the circle
by angle $\theta$.
\item If $J_{c(\theta)}$ is locally connected, then an external ray $R_\alpha$ lands at a point on the boundary of the Siegel disk
of $f_{c(\theta)}$ if and only if $\alpha\in C_\theta$.

\end{enumerate}
\end{prop}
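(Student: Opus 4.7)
The plan is to establish (2) and (4) as the foundational combinatorial facts about the doubling map $D$ restricted to $L_\theta$, to deduce (1) and (3) from minimality of irrational rotations, and to derive (5) using Carath\'eodory theory together with the Douady--Hubbard parameter-ray correspondence.

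\medskip

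\noindent\textbf{Properties (2) and (4).} Since $D$ is the degree-$2$ orientation-preserving self-cover $x\mapsto 2x$ of $\TT$, restricted to a half-arc $L_\theta$ of length $1/2$ it is an orientation-preserving bijection onto all of $\TT$ that identifies only the two endpoints $\gamma_\theta,\gamma_\theta+\tfrac12$ (both sent to $\alpha_\theta$); this yields (2). For (4), I would view $D|_{L_\theta}$, after identifying the two endpoints of $L_\theta$, as a degree-$1$ orientation-preserving circle map $\tilde D$. The key combinatorial input is that its Poincar\'e rotation number equals $\theta$, which follows by approximation from rational $p/q$: by \thmref{thm-douady} the orbit $\{D^k(\alpha_{p/q})\}$ realizes combinatorial rotation number $p/q$, and one takes continued-fraction convergents $p_n/q_n\to\theta$. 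Since $\theta\notin\QQ$, Poincar\'e's theory then produces a continuous monotone surjection $h:\TT\to\TT$ with $h\circ\tilde D=R_\theta\circ h$. The arcs on which $h$ is constant are precisely the gaps of $C_\theta$ in $L_\theta$, so $h$ realizes the required semi-conjugacy.

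\medskip

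\noindent\textbf{Properties (1) and (3).} Both follow from (4) together with minimality of the irrational rotation $R_\theta$: every $D$-orbit in $C_\theta$ projects via $h$ to a dense $R_\theta$-orbit on $\TT$, which in turn forces density back in $C_\theta$ (the fibers of $h$ being gaps of $C_\theta$, not points of $C_\theta$). Transitivity is then immediate, and the orbit of $\alpha_\theta$ is in particular dense in $C_\theta$, giving (3).

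\medskip

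\noindent\textbf{Property (5).} Assume $J_{c(\theta)}$ is locally connected. The Carath\'eodory extension $\gamma:\TT\to J_{c(\theta)}$ is continuous and equivariant, $\gamma\circ D=f_{c(\theta)}\circ\gamma$. By the Douady--Hubbard parameter-ray correspondence, the dynamical ray $R_{\alpha_\theta}$ of $f_{c(\theta)}$ lands at the critical value $c(\theta)$, which by \thmref{loc con} sits on $\partial\Delta_\theta$. Its two $D$-preimages $\gamma_\theta,\gamma_\theta+\tfrac12$ correspond to rays landing at the unique $f_{c(\theta)}$-preimage of $c(\theta)$---the critical point $0$, also on $\partial\Delta_\theta$. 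These two rays together with the critical point separate $\hat\CC\setminus J_{c(\theta)}$ into two components, with $\Delta_\theta$ contained in the component whose accessible external angles are precisely those in the interior of $L_\theta$. Hence $A:=\gamma^{-1}(\partial\Delta_\theta)$ is closed, forward-invariant, and contained in $L_\theta$, so $A\subseteq\bigcap_n D^{-n}(L_\theta)=C_\theta$. The reverse inclusion follows from (3): the forward $D$-orbit of $\alpha_\theta$ lies in $A$ and is dense in $C_\theta$, while $A$ is closed. The main obstacle is the sector-separation step---verifying that $\partial\Delta_\theta$ lies entirely on the $L_\theta$-side of the landing pair $\{R_{\gamma_\theta},R_{\gamma_\theta+1/2}\}$. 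This requires the topological fact that in a locally connected Siegel quadratic Julia set the critical point is a cut point of $J_{c(\theta)}$ at which exactly these two external rays land.
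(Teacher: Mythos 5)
The paper offers no proof here: Proposition~\ref{properties C} is stated with a citation to Bullett--Sentenac \cite{bullsent94}, and the argument you give is essentially a blind reconstruction of their combinatorics, so there is no ``paper's proof'' to compare against. Your outline follows the standard route and is broadly sound for (1)--(3) and (5), but there is a real gap in the way you set up (4), and since (1) and (3) are then deduced from (4), the gap propagates.

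The issue is that $\tl D$, defined as the descent of $D|_{L_\theta}$ to the quotient circle $L_\theta/\!\sim$, is a homeomorphism from $L_\theta/\!\sim$ onto $\TT$, \emph{not} a self-map of one circle, so Poincar\'e rotation-number theory does not apply to it as stated. The correct object is an extension $G:\TT\to\TT$ of $D|_{C_\theta}$ across the gaps of $C_\theta$ (say, affinely on each gap): this $G$ is a genuine degree-one orientation-preserving circle homeomorphism, and the wandering intervals of $G$ are precisely the gaps of $C_\theta$ because every gap is a preimage of the central gap $\TT\setminus L_\theta$. Once $G$ is in place, Poincar\'e's theorem supplies the monotone semiconjugacy $h$ onto a rigid rotation, collapsing exactly the gaps. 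Identifying the rotation number of $G$ as $\theta$ also needs more care than ``take $p_n/q_n\to\theta$ and invoke Theorem~\ref{thm-douady}'': rotation number is only upper semicontinuous in general, and you are comparing extensions $G$ built over \emph{different} Cantor sets $C_{p_n/q_n}$, so one either has to argue monotone convergence of the corresponding $G$'s (which does hold here, by the monotonicity of $p/q\mapsto\alpha_\pm(p/q)$) or read the rotation number directly off the cyclic ordering of the orbit of $\alpha_\theta$, which is how \cite{bullsent94} proceed. With that repair, your deduction of (1) and (3) from minimality of $R_\theta$ is fine --- with the small caveat that nontrivial fibers of $h$ are closed gaps whose \emph{endpoints lie in} $C_\theta$, so density is pulled back because $h$ has at most countably many nontrivial fibers. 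Your argument for (5) has the right structure and you correctly identify the load-bearing step (that $0$ is a cut point of $J_{c(\theta)}$ with exactly the two rays $R_{\gamma_\theta}$, $R_{\gamma_\theta+1/2}$ landing on it); that fact is available under the local-connectivity hypothesis via Theorem~\ref{loc con} and the Carath\'eodory semiconjugacy.
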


By \thmref{thm-douady} we have:
\begin{prop}
\label{compute c}
The Cantor set $C_\theta$ is uniformly computable from the continued fraction of $\theta$.
\end{prop}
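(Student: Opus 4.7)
The plan is to apply \propref{computableset}: exhibit a uniformly computable sequence dense in $C_\theta$ and show that $\TT \setminus C_\theta$ is a lower-computable open set, with both algorithms taking the continued fraction of $\theta$ as input. The starting ingredient is \thmref{thm-douady}, which uniformly computes the external angle $\alpha_\theta$ from the continued fraction of $\theta$; its two $D$-preimages $\gamma_\theta$ and $\gamma_\theta + 1/2$ are read off at once. For irrational $\theta$ the angle $\alpha_\theta$ is nonzero, hence distinct from both $\gamma_\theta$ and $\gamma_\theta + 1/2$ (either equality would force $2\alpha_\theta \equiv \alpha_\theta \pmod 1$, i.e.\ $\alpha_\theta = 0$). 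Comparing sufficiently accurate rational approximations therefore identifies the half-circle $L_\theta$ in finite time.

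For the dense sequence I would use \propref{properties C}(3), which states that $C_\theta$ is the closure of the forward orbit $\{D^n(\alpha_\theta)\}_{n\geq 0}$. Since $D$ is $2$-Lipschitz, $D^n(\alpha_\theta)$ can be produced with precision $2^{-m}$ by computing $\alpha_\theta$ with precision $2^{-(m+n)}$ and then iterating $D$ exactly $n$ times, so this orbit is a uniformly computable family of points dense in $C_\theta$. For the complement I would write
$$\TT \setminus C_\theta \;=\; \bigcup_{n \geq 0} D^{-n}\bigl(\TT \setminus L_\theta\bigr).$$
The set $\TT \setminus L_\theta$ is an open arc whose endpoints $\gamma_\theta,\gamma_\theta+1/2$ are computable, and any such arc is lower-computable: the pairs of rationals $(p,q)$ with $p>\gamma_\theta$ and $q<\gamma_\theta+1/2$ can be semi-enumerated from rational approximations of $\gamma_\theta$. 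Each preimage $D^{-n}(\TT\setminus L_\theta)$ is a finite union of $2^n$ open arcs whose endpoints are uniformly computable (dyadic affine combinations of $\gamma_\theta$ and $\gamma_\theta+1/2$), so it is uniformly lower-computable in $n$, and the union is lower-computable.

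\propref{computableset} then yields that $C_\theta$ is uniformly computable from the continued fraction of $\theta$. The only non-trivial obstacle is entirely absorbed by \thmref{thm-douady}: once uniform computability of $\alpha_\theta$ is in hand, the remainder of the argument is essentially a symbolic unfolding of the definition of $C_\theta$ as the itinerary Cantor set of the expanding $2$-to-$1$ map $D|_{L_\theta}$, combined with the standard density/complement characterization of computable compacta.
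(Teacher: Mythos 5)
Your proposal is correct and follows essentially the same route as the paper: invoke Theorem~\ref{thm-douady} to uniformly compute $\alpha_\theta$, use the forward $D$-orbit of $\alpha_\theta$ as the dense uniformly computable sequence via Proposition~\ref{properties C}, express $\TT\setminus C_\theta$ as $\bigcup_n D^{-n}(L_\theta^c)$ to show it is lower-computable, and conclude by Proposition~\ref{computableset}. The only addition is your explicit remark that $\alpha_\theta\neq\gamma_\theta,\gamma_\theta+\tfrac12$ (so the half-circle $L_\theta$ can be identified in finite time), a small detail the paper leaves implicit.
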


\begin{proof}We note that by Theorem \ref{thm-douady},  $\alpha_{\theta}$ is uniformly computable from the continued fraction of $\theta$. The iterates $\{D^{n}(\alpha_{\theta})\}_{n\in\NN}$ then form a sequence of uniformly computable points in $C_{\theta}$ which, by Proposition \ref{properties C}, is dense in $C_{\theta}$. On the other hand, the complement of $C_{\theta}$ equals $\bigcup_{n\in\NN}D^{-n}(L_{\theta}^{c})$, where $L_{\theta}^{c}$ denotes the open half-circle with endpoints $\gamma_{\theta}, \gamma_{\theta}+\frac{1}{2}$ which does not contain $\alpha_{\theta}$. Since these endpoints are computable as well as the map $D$, we see that the complement of $C_{\theta}$ is a lower-computable open set. The result now follows from Proposition \ref{computableset}. 
\end{proof}

Denote $\text{Comp}(\CC)$ the set of compact subsets of $\CC$. For $X,Y\in\text{Comp}(\CC)$,
let us denote 
$$d(X,Y)\equiv \inf\{r>0\; |\; X\subset U_r(Y)\}.$$ 
Let $$\Lambda:S\to \text{Comp}(\CC)$$ be a set-valued function on a topological space $S$. We say
that $\Lambda$ is {\it upper semi-continuous} at $s\in S$ if 
$$\forall \{s_n\}\subset S,\; s_n\to s,\text{ we have }d(\Lambda(s_n),\Lambda(s))\to 0.$$
The following is trivially true:
\begin{prop}
\label{upsc}
Let $f_c$ be a quadratic polynomial with a connected Julia set. The dependence of the external ray impression on the angle
$$\be\mapsto \imp(R_\beta)$$
is an upper semi-continuous function of $\uc$.
\end{prop}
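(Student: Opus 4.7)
The plan is to reduce upper semi-continuity to a sequential statement and then apply a standard characterization of prime-end impressions via sequences under the Riemann map.

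First I would recall that, as a consequence of the Carath{\'e}odory theorem, $\phi_c$ extends to a homeomorphism between $\hat\CC\setminus\DD$ and the Carath{\'e}odory closure of $\hat\CC\setminus K_c$. Combined with the defining description of the impression as a nested intersection of closures of crosscut neighborhoods along a fundamental chain, this yields the characterization: $z\in\imp(R_\beta)$ if and only if there exists a sequence $w_k\in\hat\CC\setminus\overline{\DD}$ with $w_k\to e^{2\pi i\beta}$ and $\phi_c(w_k)\to z$ in $\hat\CC$. One direction is immediate since the Carath{\'e}odory homeomorphism sends a disk-side sequence converging to $e^{2\pi i\beta}$ to an $\Omega$-sequence converging to the prime end $\mathbf{p}_\beta$, so its Euclidean limit must lie in every closed crosscut neighborhood of $\mathbf{p}_\beta$; the reverse direction comes from the fact that every point of $\imp(\mathbf{p}_\beta)$ is approached by points in each crosscut neighborhood, whose preimages under $\phi_c$ then converge to $e^{2\pi i\beta}$.

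Next, I would reduce to a sequential formulation of the conclusion. Suppose for contradiction that $\beta_n\to\beta$ but $d(\imp(R_{\beta_n}),\imp(R_\beta))\not\to 0$. Passing to a subsequence, one obtains $\varepsilon_0>0$ and $z_n\in\imp(R_{\beta_n})$ with $\dist(z_n,\imp(R_\beta))\geq\varepsilon_0$; by compactness of $K_c$ and closedness of $\imp(R_\beta)$, we may further assume $z_n\to z$ and still have $\dist(z,\imp(R_\beta))\geq\varepsilon_0$. It then suffices to show $z\in\imp(R_\beta)$.

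For this last step I would use a diagonal argument based on the characterization. For each $n$, the characterization produces $w_n^{(k)}\to e^{2\pi i\beta_n}$ with $\phi_c(w_n^{(k)})\to z_n$ as $k\to\infty$. Choose $k_n$ large enough that $|w_n^{(k_n)}-e^{2\pi i\beta_n}|<1/n$ and $|\phi_c(w_n^{(k_n)})-z_n|<1/n$, and set $w_n:=w_n^{(k_n)}$. Since $e^{2\pi i\beta_n}\to e^{2\pi i\beta}$ and $z_n\to z$, we obtain $w_n\to e^{2\pi i\beta}$ and $\phi_c(w_n)\to z$. Applying the characterization in the opposite direction gives $z\in\imp(R_\beta)$, the desired contradiction. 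The only non-formal ingredient is the sequential characterization of the impression, which is an immediate consequence of Carath{\'e}odory's theorem; with it in hand the rest is mechanical, in agreement with the excerpt's remark that the statement is trivially true.
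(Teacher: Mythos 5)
The paper offers no proof of this proposition, asserting only that it is ``trivially true,'' so there is no argument to compare against; your proof is a correct spelling-out of the standard reasoning. The sequential characterization of $\imp(R_\beta)$ via sequences $w_k\to e^{2\pi i\beta}$ with $\phi_c(w_k)\to z$ is the right tool, it does follow from the Carath\'eodory extension as you describe, and the subsequent diagonal/compactness argument (extract $z_n\in\imp(R_{\beta_n})$ staying $\varepsilon_0$-far from $\imp(R_\beta)$, pass to $z_n\to z$, diagonalize to get $z\in\imp(R_\beta)$) correctly derives the contradiction needed for the one-sided Hausdorff estimate $d\bigl(\imp(R_{\beta_n}),\imp(R_\beta)\bigr)\to 0$.
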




Assume now that $f_c$ has a Siegel fixed point $a$ with multiplier $f'(a)=e^{2\pi i\theta}$,
and denote $\Delta$ the Siegel disk of $f_c$. Set $C\equiv C_\theta$. 

\begin{defn}
Let us say that $f_c$ has {\it the small cycles property} if there exists a sequence of periodic orbits
 $$\bar z^i=\{ z_0,z_1=f_c(z_0),z_2=f_c(z_1),\ldots,z_{q-1}=f_c(z_{q-2}) \} $$ such that
\begin{itemize}
\item $d(\bar z^i,\partial\Delta)\to 0$;
\item denote $\alpha_i$ the external angle of $z_i$ and let $p/q$ be the combinatorial rotation number of the cycle $\alpha_0,\ldots,\alpha_{q-1}$,
in other words, let $D(\alpha_i)=\alpha_j$, where $j=i+p\mod q$. Then $p/q\to\theta$. 
\end{itemize}  
\end{defn}
We have the following generalization of the last item of \propref{properties C}:
\begin{prop}
\label{smallcycles}
Suppose $f_c$ has the small cycles property. Then $\imp(R_{\alpha}) \cap \bd\Delta \neq \emptyset$ for every $\alpha \in C$.
\end{prop}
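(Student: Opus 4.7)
The plan is to realize a point of $\imp(R_\alpha)\cap\partial\Delta$ as a limit of landing points of external rays whose angles approximate $\alpha$ from inside the finite $D$-orbits furnished by the small cycles property, and then to invoke the upper semicontinuity of impressions.

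First I would identify the collection of landing angles. The angles $\alpha_0^{(i)},\dots,\alpha_{q_i-1}^{(i)}$ associated with the cycle $\bar z^{(i)}$ form a periodic orbit of $D$ with combinatorial rotation number $p_i/q_i$, and by the uniqueness of such an orbit discussed preceding (\ref{critgap}) they coincide with $\cO_{p_i/q_i}$. I would next show that the $\cO_{p_i/q_i}$ accumulate on $C_\theta$ as $p_i/q_i\to\theta$. Recall that by part (3) of \propref{properties C}, $C_\theta$ is the closure of the forward orbit $\{D^N(\alpha_\theta):N\geq 0\}$, and (as established in the proof of \thmref{thm-douady}) $\alpha_-(p_i/q_i)\to\alpha_\theta$; continuity of $D$ then gives $D^N(\alpha_-(p_i/q_i))\to D^N(\alpha_\theta)$ for every fixed $N$. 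Given $\alpha\in C_\theta$ and $\eps>0$, I would choose $N$ with $|D^N(\alpha_\theta)-\alpha|<\eps/2$ and then $i$ large enough so that $|D^N(\alpha_-(p_i/q_i))-D^N(\alpha_\theta)|<\eps/2$; the point $D^N(\alpha_-(p_i/q_i))$ lies in $\cO_{p_i/q_i}$ and is within $\eps$ of $\alpha$. Taking the nearest-neighbour $\beta^{(i)}\in\cO_{p_i/q_i}$ of $\alpha$ in $\uc$, we obtain $\beta^{(i)}\to\alpha$.

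Second, each such $\beta^{(i)}$ is the external angle of some point $w^{(i)}\in\bar z^{(i)}$, so $R_{\beta^{(i)}}$ lands at $w^{(i)}$ and in particular $w^{(i)}\in\imp(R_{\beta^{(i)}})$, while the small-cycles hypothesis gives $d(w^{(i)},\partial\Delta)\to 0$. By compactness of $\partial\Delta$, a subsequence $w^{(i)}\to w_\infty\in\partial\Delta$. By \propref{upsc}, $d(\imp(R_{\beta^{(i)}}),\imp(R_\alpha))\to 0$, so one can choose $u^{(i)}\in\imp(R_\alpha)$ with $|w^{(i)}-u^{(i)}|\to 0$; the compactness of $\imp(R_\alpha)$ then forces $w_\infty\in\imp(R_\alpha)$, and hence $w_\infty\in\imp(R_\alpha)\cap\partial\Delta$ as required.

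The main obstacle is establishing $\alpha_-(p_i/q_i)\to\alpha_\theta$ for an \emph{arbitrary} sequence $p_i/q_i\to\theta$, since the proof of \thmref{thm-douady} explicitly addresses only the continued fraction convergents of $\theta$. This extension can be handled by a monotonicity argument along the boundary of the main cardioid, or by observing that only one external ray of the Mandelbrot set lands at the irrational-internal-angle parameter $c(\theta)$, so every accumulation point of the sequence $\{\alpha_-(p_i/q_i)\}$ must coincide with $\alpha_\theta$.
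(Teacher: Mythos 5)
Your argument is correct and is exactly the (unwritten) content behind the paper's one-line proof: approximate $\alpha\in C_\theta$ by angles from the cycles $\cO_{p_i/q_i}$ landing on $\bar z^{(i)}$, note these landing points approach $\partial\Delta$, and then apply the upper semicontinuity in \propref{upsc}. The concern you flag about $\alpha_-(p_i/q_i)\to\alpha_\theta$ for arbitrary $p_i/q_i\to\theta$ is real but is settled by either of your two fixes (monotonicity of internal versus external angle on the main cardioid boundary, or uniqueness of the ray landing at $c(\theta)$), so the proposal does not have a genuine gap.
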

The statement follows from \propref{upsc}.


\subsection{The construction}

We now prove:

\begin{lemma}\label{no comp on boundary}Assume $\Delta_{\theta}$ is a Siegel disk for which
\begin{enumerate}
 \item the linearizing coordinate $\phi:\DD\to\Delta$ continuously extends to is a $C^{1}$-smooth mapping of $S^1\to\bd\Delta$, and
 \item the conformal radius  $r(\theta)$ is not computable. 
\end{enumerate} 
Then every compact set $A\subset J_\theta$ which intersects the boundary $\partial \Delta_{\theta}$ is not computable.
\end{lemma}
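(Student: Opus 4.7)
The plan is a proof by contradiction: I assume such a computable compact set $A$ exists and derive computability of $r(\theta)$, contradicting hypothesis~(2). The parameter $\theta$ is necessarily computable in the intended application, so Theorem~\ref{nc julia} already guarantees that $r(\theta)$ is right-computable. It therefore suffices to produce, from $A$, a computable sequence of lower approximations $r_n\nearrow r(\theta)$; together with right-computability this gives full computability of $r(\theta)$, the sought contradiction.

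The mechanism for producing such lower approximations is the Carath\'eodory Kernel Theorem combined with Theorem~\ref{comp-bded1}: if I can exhibit a computable sequence of simply-connected subdomains $V_n\Subset\Delta_\theta$ with $0\in V_n$, converging in the Carath\'eodory kernel topology to $\Delta_\theta$, then each $r(V_n,0)$ is uniformly computable via the constructive Riemann Mapping Theorem and $r(V_n,0)\nearrow r(\theta)$, giving left-computability.

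To construct the $V_n$, I would exploit the dynamics on $\partial\Delta_\theta$. By Proposition~\ref{computableset}, computability of $A$ yields a computable dense subsequence $\{a_m\}\subset A$. Since $f_c$ is uniformly computable, so is the collection $\Gamma=\overline{\bigcup_{m,\,k\ge 0}\{f_c^k(a_m)\}}$. Any point of $A\cap\partial\Delta_\theta$ generates a forward orbit on $\partial\Delta_\theta$ which is dense there, because, by the $C^1$ extension $\phi|_{S^1}$ in hypothesis~(1), the map $f_c|_{\partial\Delta_\theta}$ is $C^1$-conjugate to the irrational rotation $\tau\mapsto\tau+\theta$. Hence $\Gamma\supset\partial\Delta_\theta$. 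The $C^1$ smoothness of $\partial\Delta_\theta$ would then let me identify $\partial\Delta_\theta$ inside $\Gamma$ as a rectifiable simple closed curve surrounding $0$, from which the inner domains $V_n$ are extracted by inward thickening along tangent directions and passing to a sub-domain exhausting the interior.

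The principal obstacle is precisely this last step: disentangling $\partial\Delta_\theta$ from the remainder of $\Gamma$. Points of $A$ not lying on $\partial\Delta_\theta$ may seed forward orbits accumulating on other parts of $J_\theta$, which has fractal structure away from the Siegel boundary. The $C^1$ regularity is exactly what singles out $\partial\Delta_\theta$ as a smooth, rectifiable Jordan curve within $J_\theta$, and converting this geometric distinction into an effective, terminating filtering procedure that outputs a nested sequence of computable inner approximations to $\Delta_\theta$ is where the bulk of the technical work will be concentrated.
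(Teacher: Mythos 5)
Your proposal correctly identifies that the goal is to upgrade the (assumed) computability of $A$ to computability of $r(\theta)$ via some kind of conformal-radius approximation, but the route you take — \emph{inner} approximations $V_n\Subset\Delta_\theta$ — runs into exactly the difficulty you flag at the end, and that difficulty is fatal as stated. You cannot algorithmically isolate $\partial\Delta_\theta$ inside the orbit-closure $\Gamma$: the forward orbit of points of $A\setminus\partial\Delta_\theta$ can accumulate densely on other parts of $J_\theta$, and the $C^1$ regularity of $\partial\Delta_\theta$ is a property of the limiting set, not something you can certify from a finite-precision picture of $\Gamma$. Nothing in your write-up converts that geometric distinction into a terminating procedure, and you acknowledge as much. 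You also lean on right-computability of $r(\theta)$ via Theorem~\ref{nc julia}, which requires $\theta$ computable — an extra hypothesis not present in the lemma (though harmless in the intended application).

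The paper sidesteps the disentanglement problem entirely by approximating $\Delta_\theta$ from the \emph{outside} and by using only a \emph{finite} piece of the orbit. Concretely: pick $a,b>0$ so that any rotation orbit of length $q_n$ (the $n$-th continued-fraction denominator) is an $ae^{-bn}$-net of $S^1$; let $c$ bound $|\phi'|$ on $S^1$ (this is where $C^1$ enters); then $S_n=\bigcup_{j=1}^{q_n}P_\theta^j(A)$ is a computable compact set with $\partial\Delta_\theta\subset U_{cae^{-bn}}(S_n)=:D_n$. It does not matter at all that $D_n$ also contains a neighborhood of much of the rest of $J_\theta$. One takes $U_n$ to be $D_n$ together with the component of the complement of $\overline{D_n}$ containing $0$; then $\Delta_\theta\subset U_n$, $\partial\Delta_\theta\subset U_{3cae^{-bn}}(\partial U_n)$, and $r(U_n,0)$ is computable. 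Crucially, Lemma~\ref{variation conf radius} gives the \emph{quantitative} two-sided bound
\[
r(U_n,0)-r(\theta)\le 4\sqrt{r(U_n,0)}\,\sqrt{3cae^{-bn}},
\]
so the computable numbers $r(U_n,0)$ approximate $r(\theta)$ with an explicitly computable error. This yields full computability of $r(\theta)$ directly, without any separate appeal to right-computability. So the two ingredients you are missing are: (i) approximate from the outside, so no set-theoretic separation of $\partial\Delta_\theta$ from $J_\theta$ is needed, and (ii) use the Lavrentiev-type/conformal-radius distortion estimate (Lemma~\ref{variation conf radius}) to get a quantitative rate, rather than trying to produce a monotone sequence of lower bounds.
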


\begin{pf} Assume the contrary. Standard facts about continued fractions imply that there exist
$a,b>0$ such that the following holds. Denote $p_n/q_n$ the $n$-th convergent of $\theta$, and let $x\in S^1$.
Let $\tau_\theta(x)=e^{2\pi i\theta}x$ be the rigid rotation.
Then, the points $x,\tau_\theta(x),\ldots,\tau_\theta^{q_n}$ form an $ae^{-bn}$-net of $S^1$.
Let $c>0$ be an upper bound on $|\phi'(x)|$ for $x\in S^1$. Then the set
$$S_n=\cup_{j=1}^{q_n}P_\theta^j(A)$$
has the property
$$d(\partial\Delta,S_n)<cae^{-bn}.$$

It follows that the sets $D_{n}=U_{cae^{-bn}}(S_{n})$ are  connected sets containing $\partial \Delta$.  Let $U_{n}$ be the domain consisting of $D_{n}$ together with the connected component of the complement of the closure of $U_{n}$ which contains the origin.  Clearly, we have that $r(U_{n},0)$ is computable. Moreover, since $\Delta \subset U_{n}$ and $\partial\Delta \subset U_{3 cae^{-bn}}(\partial U_{n})$, by \lemref{variation conf radius} this implies that $r(\theta)$ is also computable, which contradicts our assumptions.
\end{pf}

The statement of the Main Theorem follows immediately from \propref{compute c}, \propref{smallcycles}, \lemref{no comp on boundary}, and the following theorem:
\begin{thm}
\label{pfmain}
For every right computable $r\in(0,r_*)$ there exists a computable Brjuno parameter $\gamma\in\uc$ such that:
\begin{itemize}
\item $r(\gamma)=r$;
\item the boundary $\bd\Delta$ is a $C^1$-smooth curve;
\item $P_\gamma$ satisfies the small cycles condition.
\end{itemize}
Moreover, $\gamma$ is uniformly computable from a computable sequence $r_k\searrow r$.
\end{thm}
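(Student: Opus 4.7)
The plan is to realize $\gamma$ as the limit of a computably constructed continued fraction expansion $[a_1,a_2,a_3,\ldots]$, choosing each entry $a_n$ by a terminating algorithm so that all three properties emerge simultaneously in the limit. Three threads must be interleaved: the selection procedure behind \thmref{nc julia} to prescribe $r(\gamma)=r$; the Avila--Buff--Ch\'eritat scheme of \cite{ABC,BC2} to enforce $C^1$-smoothness of $\partial\Delta_\gamma$; and a near-parabolic argument at each rational convergent to supply the small cycles.

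For the inductive step, having fixed $a_1,\ldots,a_n$, set $\theta_n=[a_1,\ldots,a_n,1,1,1,\ldots]$, a number of bounded type whose conformal radius $r(\theta_n)$ is uniformly computable by \thmref{comp-bded} and whose linearizer is uniformly computable on compact subdisks of $\DD$ by \thmref{comp-bded1}. Replacing the tail $1,1,\ldots$ by $N,1,1,\ldots$ and letting $N\to\infty$ drives the conformal radius monotonically down toward that of the rational $[a_1,\ldots,a_n]$, which is $0$; this is the monotone mechanism exploited in the proof of \thmref{nc julia}. We therefore search effectively for the smallest $N$ satisfying (i) $r([a_1,\ldots,a_n,N,1,1,\ldots])<r_n$, for $\{r_k\}$ the given computable sequence $r_k\searrow r$; (ii) $N\ge G_n(a_1,\ldots,a_n)$, where $G_n$ is the computable ``fast growth'' threshold from the \cite{ABC} construction that controls the $C^1$-norm of the prospective linearizer at stage $n$; and (iii) $N\ge H_n(a_1,\ldots,a_n)$, a small-cycles threshold described below. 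Set $a_{n+1}:=N$.

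Passing to the limit, the Hausdorff-convergent sequence $J_{\theta_n}\to J_\gamma$ combined with \propref{r doest drop} and $r(\theta_n)\searrow r$ yields $r(\gamma)=r$, proving (1). For the $C^1$-smoothness in (2), the growth condition (ii) reproduces the ABC diagonal estimate: the extensions $\phi_{\theta_n}|_{S^1}$ form a Cauchy sequence in $C^1(S^1)$, so by \thmref{phi-cont} and Carath\'eodory convergence the linearizer $\phi_\gamma$ extends to a $C^1$ map $S^1\to\partial\Delta_\gamma$. For the small cycles property in (3), each convergent $p_n/q_n=[a_1,\ldots,a_n]$ gives rise to a $q_n$-periodic cycle of $P_\gamma$ obtained by perturbing the parabolic fixed point of $P_{p_n/q_n}$; standard parabolic implosion identifies the combinatorial rotation number of this cycle, via the landing external rays, with $p_n/q_n\to\gamma$. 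The threshold $H_n$ in (iii) is chosen so that this cycle lies within Hausdorff distance $2^{-n}$ of $\partial\Delta_\gamma$. Computability of $\gamma$ from $\{r_k\}$ then follows because each $a_{n+1}$ is the output of a terminating search over uniformly computable quantities.

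The main technical obstacle is reconciling conditions (i) and (ii). The $C^1$-control in \cite{ABC} was carried out without pinning the final value of $r(\gamma)$; here one must verify that the growth thresholds $G_n$ needed for smoothness remain compatible with the sharp constraint on the conformal radius that pins $r(\gamma)$ to the prescribed target. Concretely, for every $n$ the set of integers $N$ satisfying (i)--(iii) must be nonempty and effectively locatable, which hinges on the quantitative dependence of $r(\theta)$, of the linearizer's $C^1$ norm, and of the position of the small cycles on the tail of the continued fraction, uniformly in the initial segment $a_1,\ldots,a_n$. Establishing such a uniform quantitative version of the \cite{ABC} estimate compatible with the \thmref{nc julia} machinery is where the bulk of the work lies.
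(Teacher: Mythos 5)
Your high-level plan---interleave a continued-fraction search that pins the conformal radius, the Buff--Ch\'eritat / Avila--Buff--Ch\'eritat mechanism for a $C^1$ boundary, and near-parabolic cycles for the small cycles property---is the right one and is essentially the paper's. But the inductive step as you describe it has two problems, one structural and one (which you have yourself flagged) that the paper closes by a maneuver you did not consider.

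The structural problem: your step fixes $a_1,\ldots,a_n$ and then sets $a_{n+1}:=N$, adding exactly one entry, with $N$ bounded below by ``fast growth'' thresholds. But Lemma~\ref{stepinduction}, which packages the BC mechanism, requires inserting a large entry at an increasingly \emph{distant} position: $\theta(A,n)=[r_1,\ldots,r_n,A_n,1,1,\ldots]$ with $A_n\approx A^{q_n}$, and the controlled drop $r\mapsto r/A$ is obtained only in the limit $n\to\infty$. At a fixed next position there is no quantitative control on $N\mapsto r([a_1,\ldots,a_n,N,1,1,\ldots])$ beyond the fact that it tends to $0$, and if every entry is forced to be large (to meet your thresholds at every index), the resulting number need not remain Brjuno. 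The paper's inductive step accordingly appends a \emph{block} $\underbrace{1,\ldots,1}_{l},N$ and searches over pairs $(l,N)$; the run of $1$'s is precisely what makes Lemma~\ref{stepinduction} and the Brjuno condition available.

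The more important gap is the one you acknowledge in your last paragraph: you want computable thresholds $G_n,H_n$ that guarantee the estimates, and you note that proving a uniform quantitative version of the ABC estimates compatible with the conformal-radius pinning ``is where the bulk of the work lies.'' The paper sidesteps this entirely. In Lemma~\ref{ind1}, the \emph{existence} of an acceptable pair $(l,N)$ at each stage is obtained softly from Lemma~\ref{stepinduction} together with Lemma~\ref{continuity}, with no rate. Computability is then recovered by observing that each of the five required conditions (conformal radius trapped in an open interval determined by $r_n,r_{n+1}$; $C^1$-closeness of linearizers on the sub-maximal disk $\overline{B(0,r_{n+2})}$, made checkable via Theorems~\ref{comp-bded} and~\ref{comp-bded1}; position and combinatorics of repelling cycles) is a strict inequality that can be algorithmically \emph{verified} once it holds. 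The algorithm then simply enumerates $(l,N)$ and waits; termination follows from the non-constructive existence statement. This ``soft existence + algorithmic verification + exhaustive search'' trick is the key idea your proposal is missing, and it removes the need for the thresholds $G_n,H_n$ altogether. A minor additional point: the $C^1$-Cauchy estimate must be on a slightly sub-maximal circle $\partial B(0,r_{n+2})$ rather than on $S^1$, since for bounded-type $\theta_n$ the boundary $\partial\Delta_{\theta_n}$ passes through the critical point and the boundary parametrization is not $C^1$.
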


The proof of \thmref{pfmain} is a combination of the arguments of \cite{BC2,ABC} and \cite{BY-MMJ}. 
Let $\theta=[r_1,\ldots]\in(0,1)$ be any Brjuno number, and let $p_n/q_n=[r_1,\ldots,r_n]$ denote its continued fraction approximants. 
Let $A>1$, set $A_n=[A^{q_n}]$ and denote 
$$\theta(A,n)=[r_1,\ldots,r_n,A_n,1,1,\ldots].$$
We first state a lemma, which
summarizes Propositions 2 and 3 of \cite{BC2}:
\begin{lem}
\label{stepinduction}
Let $\theta$ and $\theta(A,n)$ be as above. Then,
$$\lim r(\theta(A,n))=\frac{r(\theta)}{A},$$
and the linearizing parametrizations $\phi_{\theta(A,n)}$ converge uniformly to $\phi(\theta)$ on every compact subset of $B(0,r(\theta)/A)$.

Furthermore, $P_{\theta(A,n)}$ has a periodic cycle $O_n$ such that in the Hausdorff topology on compact sets 
$$\lim_{n\to\infty} O_n=\phi_\theta(\partial B(0,r_\theta/A)).$$
The cycle of external angles of rays landing at $O_n$ has a combinatorial rotation number $p_{k_n}/q_{k_n}$. 
\end{lem}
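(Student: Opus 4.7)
The plan is to construct $\gamma$ as the limit of an inductively defined sequence of bounded-type Brjuno numbers $\gamma_0, \gamma_1, \gamma_2, \ldots$, produced by iterating the substitution $\theta \mapsto \theta(A, n)$ of \lemref{stepinduction}. This merges the effectivization scheme of \cite{BY-MMJ} (which controls the conformal radius) with the smoothing induction of \cite{BC2,ABC} (which controls the boundary regularity). Fix any computable initial Brjuno number $\gamma_0$ of bounded type with $r(\gamma_0) > r_0$. At stage $k$, given $\gamma_k$, select a rational $A_{k+1} > 1$ so that $r(\gamma_k)/A_{k+1}$ lies in the open interval $(r_{k+1}, r_k)$, then algorithmically search for an index $n_{k+1}$, larger than all previous substitution indices, such that $\gamma_{k+1} := \gamma_k(A_{k+1}, n_{k+1})$ satisfies
\begin{equation*}
|r(\gamma_{k+1}) - r(\gamma_k)/A_{k+1}| < 2^{-k}, \qquad \|\phi_{\gamma_{k+1}} - \phi_{\gamma_k}\|_{C^1(\overline{B(0, r_{k+2})})} < 2^{-k}.
\end{equation*}
Existence of such $n_{k+1}$ is guaranteed by the two conclusions of \lemref{stepinduction}, and each search is effective because the bounded-type numbers $\gamma_k$ have computable conformal radii and linearizing maps by \thmref{comp-bded} and \thmref{comp-bded1}. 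The continued fractions of $\gamma_k$ stabilize on longer and longer initial segments, so $\gamma := \lim \gamma_k$ exists and is uniformly computable from the input sequence $r_k$.

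For the identity $r(\gamma) = r$: by construction $r(\gamma_k) \to r$, and the sequence remains bounded below by $r > 0$, so \propref{r doest drop} gives $r(\gamma) \geq r$. The opposite inequality follows from \thmref{phi-cont}, after verifying that the controlled growth of the inserted partial quotients $A_{k+1}^{q_{n_{k+1}}}$ forces $\Phi(\gamma_k) \to \Phi(\gamma)$: continuity of $\upsilon(\theta) = \Phi(\theta) + \log r(\theta)$ then yields $r(\gamma) = r$. The $C^1$-smoothness of $\bd \Delta_\gamma$ is enforced by a quantitative version of the smoothing step of \cite{BC2,ABC}: since $r(\gamma_k)/A_{k+1} \to r$ from above with $A_{k+1} \to 1$, the $C^1$-controlled convergence $\phi_{\gamma_k} \to \phi_\gamma$ on disks of radii approaching $r$ extends to a $C^1$-Cauchy sequence of parametrizations of the Siegel boundaries, whose limit gives a $C^1$-smooth parametrization of $\bd \Delta_\gamma$. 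The small cycles property is inherited directly from \lemref{stepinduction}: each $P_{\gamma_{k+1}}$ carries a cycle $O_{n_{k+1}}$ arbitrarily close to the invariant analytic curve $\phi_{\gamma_k}(\bd B(0, r(\gamma_k)/A_{k+1}))$ with combinatorial rotation number converging to $\gamma_{k+1}$; since these cycles are repelling they persist under the small parameter perturbation $\gamma_{k+1} \to \gamma$, yielding cycles of $P_\gamma$ with distances to $\bd \Delta_\gamma$ tending to zero and combinatorial rotation numbers tending to $\gamma$.

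The principal obstacle lies in the joint effectivization of the three controls. The smoothing argument of \cite{BC2,ABC} cannot be invoked as a black box here: it must be made fully quantitative so that at each stage one can algorithmically certify the $C^1$-tolerance $2^{-k}$ on the linearizing maps. This requires computable bounds on all implicit constants appearing in the convergence estimates of \cite{BC2}, and is the place where the construction most delicately combines the effective dynamics of bounded-type Siegel parameters with the perturbative smoothing scheme. Once those quantitative estimates are in place, the three desired properties follow simultaneously from the single inductive construction above.
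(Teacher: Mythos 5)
The proposal proves the wrong statement. You were asked to establish Lemma~\ref{stepinduction} itself: that for a fixed Brjuno number $\theta$ and a fixed $A>1$, as $n\to\infty$ the modified parameters $\theta(A,n)=[r_1,\ldots,r_n,A_n,1,1,\ldots]$ (with $A_n=[A^{q_n}]$) have conformal radii converging to $r(\theta)/A$, linearizing maps $\phi_{\theta(A,n)}$ converging locally uniformly on $B(0,r(\theta)/A)$ to $\phi_\theta$, and carry periodic cycles $O_n$ which converge in Hausdorff distance to the invariant circle $\phi_\theta(\partial B(0,r(\theta)/A))$, with combinatorial rotation numbers $p_{k_n}/q_{k_n}$. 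Your argument never addresses any of these limits; it \emph{invokes} them (``Existence of such $n_{k+1}$ is guaranteed by the two conclusions of Lemma~\ref{stepinduction}'') and runs an effective diagonal induction to produce a limit parameter $\gamma$ with $r(\gamma)=r$, a $C^1$ Siegel boundary, and the small-cycles property. That is a sketch of Theorem~\ref{pfmain} via Lemma~\ref{ind1}, not of Lemma~\ref{stepinduction}; treating the statement to be proved as an available black box makes the argument circular with respect to the stated goal.

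In the paper, Lemma~\ref{stepinduction} carries no proof at all: it is explicitly presented as a \emph{summary} of Propositions~2 and~3 of Buff--Ch\'eritat~\cite{BC2}, and all three assertions are taken from that source. A blind proof would therefore have to either reproduce the Buff--Ch\'eritat perturbation analysis of the Siegel disk under insertion of the single large partial quotient $A_n$, or at minimum identify the responsible mechanism: the radius limit follows from the continuity of $\Phi(\theta)+\log r(\theta)$ (Theorem~\ref{phi-cont}) once one verifies that inserting $A_n=[A^{q_n}]$ at position $n+1$ shifts the Brjuno sum by $\log A + o(1)$; the Carath\'eodory kernel theorem together with $r(\theta(A,n))\to r(\theta)/A$ and Proposition~\ref{r doest drop} then converts this into locally uniform convergence of the linearizing maps; and the cycles $O_n$ are the repelling orbits created when the Siegel disk shrinks from conformal radius $\approx r(\theta)$ to $\approx r(\theta)/A$. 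None of this appears in your proposal. What you wrote is a plausible rough outline of Theorem~\ref{pfmain}, and as such matches the structure of Lemma~\ref{ind1} and the final limiting step, but it leaves the target lemma itself entirely unproven.
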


We will also need the following observation (see for instance \cite{milnor-orm}).

\begin{lem}
\label{continuity} Suppose $U$ is an open neighborhood in parameter space $\CC$ such that for all $c\in U$ there exists a periodic repelling  point $x(c)$ which depends continuously on $c$. Then the external angles of rays landing at $x(c)$ do not change through $U$.
\end{lem}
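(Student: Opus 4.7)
For $c\in U$, let $A(c)\subset \RR/\ZZ$ denote the (finite) set of external angles $\alpha$ whose ray $R_\alpha$ for $f_c$ lands at the repelling periodic point $x(c)$. The plan is to show that $c\mapsto A(c)$ is locally constant on $U$; since $U$ is an open neighborhood, this will give constancy on each connected component, hence the statement of the lemma. An initial observation is that every $\alpha\in A(c)$ is $D$-periodic: if $x(c)$ has period $p$ under $f_c$, then $R_{D^p(\alpha)}$ lands at $f_c^p(x(c))=x(c)$, so $D^p$ permutes the finite set $A(c)$, which forces each $\alpha\in A(c)$ to be $D$-periodic (in particular rational with denominator of the form $2^k-1$).

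The key input is the classical stability theorem for landings of periodic rays at repelling periodic points (Douady--Hubbard; see \cite{milnor-orm}): if $R_\alpha$ lands at the repelling periodic point $x(c_0)$ for $f_{c_0}$, then there is a neighborhood $V\ni c_0$ and a unique holomorphic family of repelling period-$p$ points $c\mapsto\tilde x(c)$ with $\tilde x(c_0)=x(c_0)$, such that the ray $R_\alpha$ for $f_c$ lands at $\tilde x(c)$ for every $c\in V$. I would first note that $\tilde x(c)$ coincides with the given family $x(c)$ on a neighborhood of $c_0$: since $|(f_{c_0}^p)'(x(c_0))|>1\neq 1$, the implicit function theorem provides the unique holomorphic continuation of $x(c_0)$ as a fixed point of $f_c^p$, and the continuity hypothesis on $x(c)$ forces $x(c)=\tilde x(c)$ on a possibly smaller neighborhood. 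The stability theorem itself combines two standard ingredients: the holomorphic dependence of the B\"ottcher coordinate on $c$, which gives $R_\alpha$ continuously in $c$ outside any fixed equipotential; and the existence, uniform in $c\in V$, of a contracting inverse branch of $f_c^p$ fixing $x(c)$, which can be iterated on a fundamental segment of $R_\alpha$ near $x(c)$ to show that the ray actually reaches $x(c)$. Consequently $A(c_0)\subseteq A(c)$ for all $c$ in a neighborhood of $c_0$, i.e.\ $A$ is lower semi-continuous.

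To upgrade this to equality I would exploit symmetry of the argument: the hypothesis that $x(c)$ is repelling for \emph{every} $c\in U$ means the stability theorem can be applied at any $c\in U$ with the same standing. Thus for $c$ near $c_0$ there is a neighborhood $W_c\ni c$ with $A(c)\subseteq A(c')$ for all $c'\in W_c$; choosing $c$ close enough to $c_0$ so that $c_0\in W_c$ yields $A(c)\subseteq A(c_0)$, hence $A(c)=A(c_0)$ on a neighborhood of $c_0$. So $A$ is locally constant on $U$ and the lemma follows. The main obstacle in the argument is the stability theorem invoked above: one must carefully set up the inverse branch of $f_c^p$ so that it is defined and contracting uniformly in $c\in V$ on a disk that meets a segment of $R_\alpha$ produced via the B\"ottcher coordinate, and then verify that the iterated pullbacks converge to $x(c)$. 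Since this is classical material, I would cite it directly rather than reproducing the proof.
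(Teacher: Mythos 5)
The paper treats this lemma as standard background: it states the result and cites \cite{milnor-orm} without supplying a proof. Your reconstruction is a reasonable account of what that reference does, and the lower semi-continuity half is correct: if $R_\alpha$ lands at the repelling point $x(c_0)$, the Douady--Hubbard stability argument (B\"ottcher coordinate plus a uniformly contracting inverse branch of $f_c^p$) combined with the implicit function theorem gives $A(c_0)\subseteq A(c)$ on a neighbourhood of $c_0$, and the identification $\tilde x\equiv x$ via the continuity hypothesis is also fine.

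The upper semi-continuity half as you wrote it has a genuine gap. Applying stability at a parameter $c$ near $c_0$ produces a neighbourhood $W_c\ni c$, but nothing in your argument controls the size of $W_c$: the stability neighbourhood degenerates as the $D$-period of the relevant angle grows or as the multiplier of $x(c)$ approaches the unit circle, and you have no a priori bound on the period of a putative extra angle $\beta\in A(c)\setminus A(c_0)$. The clause ``choosing $c$ close enough to $c_0$ so that $c_0\in W_c$'' therefore assumes the conclusion. A patch in the same spirit that stays elementary: any such $\beta$ is $D$-periodic, hence (when $J_{c_0}$ is connected) the ray $R_\beta$ for $f_{c_0}$ lands at some periodic point $y_0\neq x(c_0)$. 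If $y_0$ is repelling, apply your own stability step at $c_0$ to $y_0$ to conclude that $R_\beta$ for $f_c$ lands at the continuation $y(c)\neq x(c)$ for all $c$ near $c_0$; since a periodic ray has a unique landing point, this contradicts $\beta\in A(c)$. If $y_0$ is parabolic, one needs more care, and that is precisely where the orbit-portrait structure theory of \cite{milnor-orm} is genuinely doing work rather than a formal symmetry trick. I would either carry that case through explicitly or, as the paper does, simply cite the stability of orbit portraits from \cite{milnor-orm} and not reprove it.
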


We now prove:
\begin{lem}
\label{ind1}
There exists a sequence of Brjuno numbers
$\gamma_n=[a_1,\ldots,a_{k_n},1,1,1,\ldots]$
such that the following properties hold:
\begin{enumerate}
\item $|\gamma_n-\gamma_{n+1}|<2^{-n}$;
\item $r(\gamma_n)\in (r_n,r_{n+1})$;
\item the $C^1$ distance between the linearizing maps $\phi_{\gamma_n}$ and $\phi_{\gamma_{n+1}}$ is bounded by $2^{-n}$ on the 
closed disk $\overline{B(0,r_{n+2})}$;
\item the map $P_{\gamma_n}$ has a periodic cycle $O_n$ with combinatorial rotation number at infinity equal to $p_{m_n}/q_{m_n}$ with $m_n\leq k_n$ such that
$$\dist_H(O_n,\bd\Delta_{\gamma_n})<2^{-n};$$
\item for every periodic orbit of $P_{\gamma_{n-1}}$ with period $\leq q_{m_{n-1}}$ there is a periodic orbit point of $P_{\gamma_n}$ with the same combinatorial rotation
number at infinity and within Hausdorff distance $2^{-n}$ of it;
\item the number $\gamma_n$ can be computed uniformly from $a_1,\ldots, a_{k_{n-1}}$ and $r_n,r_{n+1}$.
\end{enumerate}
\end{lem}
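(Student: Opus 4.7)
The plan is to carry out an induction on $n$ using \lemref{stepinduction} as the main engine. For the base case, start from an explicit Brjuno number $\gamma_0$ of bounded type whose conformal radius is computable and exceeds $r_1$ (this is possible since $r_1<r_*$), and produce $\gamma_1$ by one application of the lemma.

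For the inductive step, given $\gamma_{n-1}=[a_1,\ldots,a_{k_{n-1}},1,1,\ldots]$ with $r(\gamma_{n-1})>r_n$, observe that $\gamma_{n-1}$ is of bounded type with bound $B_{n-1}=\max\{a_1,\ldots,a_{k_{n-1}}\}$, so by \thmref{comp-bded} the radius $r(\gamma_{n-1})$ is computable and by \thmref{comp-bded1} the linearizing map $\phi_{\gamma_{n-1}}$ is computable on any closed disk compactly contained in $B(0,r(\gamma_{n-1}))$. I would then algorithmically locate a dyadic $A>1$ for which $r(\gamma_{n-1})/A$ lies strictly interior to $(r_{n+1},r_n)$, bounded away from both endpoints, and apply \lemref{stepinduction} to $\gamma_{n-1}$ with this $A$. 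This produces candidates
\[
\gamma_{n-1}(A,m)=[a_1,\ldots,a_{k_{n-1}},1,\ldots,1,A_m,1,1,\ldots],
\]
each of bounded type, whose conformal radii tend to $r(\gamma_{n-1})/A$ as $m\to\infty$, whose linearizing maps converge uniformly on compact subsets of $B(0,r(\gamma_{n-1})/A)$ to $\phi_{\gamma_{n-1}}$ (hence in $C^1$ on $\overline{B(0,r_{n+2})}$ by Cauchy's estimates, since $r_{n+2}<r_{n+1}<r(\gamma_{n-1})/A$), and which carry periodic cycles $O_m$ of $P_{\gamma_{n-1}(A,m)}$ collapsing in Hausdorff distance onto $\phi_{\gamma_{n-1}}(\partial B(0,r(\gamma_{n-1})/A))$, with rotation number equal to a continued-fraction convergent of $\gamma_{n-1}(A,m)$.

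Next I would search for the smallest $m$ for which $\gamma_n:=\gamma_{n-1}(A,m)$ simultaneously verifies (1)--(4): conditions (1) and (2) are checkable directly through \thmref{comp-bded} applied to $\gamma_n$; condition (3) through \thmref{comp-bded1}; condition (4) follows from combining the Hausdorff convergence of $O_m$ provided by \lemref{stepinduction} with Carath\'eodory kernel convergence of $\Delta_{\gamma_n}$ to $\phi_{\gamma_{n-1}}(B(0,r(\gamma_{n-1})/A))$, which is a consequence of the uniform convergence of the linearizing maps together with the bounded-type control on boundary behavior. The existence of such $m$ is guaranteed by the lemma. Condition (5) is then handled by invoking \lemref{continuity}: the finitely many repelling periodic orbits of $P_{\gamma_{n-1}}$ of period at most $q_{m_{n-1}}$ vary continuously in a parameter neighborhood of $\gamma_{n-1}$ with locally constant external angles, so tightening the bound in (1) further if needed places the perturbed orbits of $P_{\gamma_n}$ within Hausdorff distance $2^{-n}$ of the originals. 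Uniformity (6) is automatic since every step only consults $a_1,\ldots,a_{k_{n-1}}$, $r_n$, $r_{n+1}$.

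The principal obstacle is that \lemref{stepinduction} is stated qualitatively: it gives convergence of the radii, linearizing maps, and small cycles as $m\to\infty$, but no a priori effective rate. This is circumvented by the fact that each candidate $\gamma_{n-1}(A,m)$ is of explicit bounded type with an explicit bound, so Theorems \ref{comp-bded} and \ref{comp-bded1} render the radius, the linearizing map on $\overline{B(0,r_{n+2})}$, and (by brute-force root-finding of the appropriate periodic equation together with a direct computation of external angles via \thmref{thm-douady}) the cycle $O_m$ all finitely checkable. The delicate balancing act is that $A$ must be chosen to leave enough margin in $(r_{n+1},r_n)$ to absorb the finite-stage error $|r(\gamma_n)-r(\gamma_{n-1})/A|$, and $m$ must be chosen large enough to satisfy all four inequalities at once; with the bounded-type tools in hand, however, the search for such $m$ terminates in finite time.
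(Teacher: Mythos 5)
Your proposal is essentially the same as the paper's proof: both run an induction, appeal to Lemma~\ref{stepinduction} and Lemma~\ref{continuity} for the \emph{existence} of a suitable candidate, note that each candidate is of explicit bounded type so that Theorems~\ref{comp-bded} and~\ref{comp-bded1} render the conformal radius, linearizing map, and periodic-cycle data effectively computable, and then terminate by an exhaustive search that is guaranteed to halt. The only structural difference is cosmetic: you first fix a dyadic $A>1$ placing $r(\gamma_{n-1})/A$ interior to the target interval and then search over $m$ in the one-parameter family $\gamma_{n-1}(A,m)$, whereas the paper runs a brute-force search over pairs $(N,l)$, i.e.\ over candidates $[a_1,\ldots,a_{k_{n-1}},\underbrace{1,\ldots,1}_l,N,1,1,\ldots]$ directly; both searches are semi-decidable and guaranteed to succeed by the same existence argument. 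One small inaccuracy: you cite Theorem~\ref{thm-douady} (which computes parameter-space external angles on $\partial\cM$) to justify computing the combinatorial rotation number of a dynamical repelling cycle; that step actually rests on the elementary fact that external rays landing at repelling cycles, and their rotation numbers, are computable directly in the dynamical plane, as the paper states without a theorem reference. This does not affect the correctness of the argument.
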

\begin{proof}
The proof is an induction based on \lemref{stepinduction}. The base of induction is clear. For a step of induction, note that the {\it existence} of 
an angle
$$\gamma_n=[a_1,\ldots,a_{k_{n-1}},\underbrace{1,\ldots,1}_l,N,1,1,\ldots]$$
satisfying the conditions (1)-(5) follows immediately from \lemref{stepinduction} 
and \lemref{continuity}. 

We claim that for each pair $N,l$, conditions   (1)-(5) 
 can be algorithmically checked in the sense that there is an algorithm which, upon input $(N,l)$,
 halts if and only if all the conditions are satisfied by 
$$[a_1,\ldots,a_{k_{n-1}},\underbrace{1,\ldots,1}_l,N,1,1,\ldots].$$ 
Assuming this claim, we can employ an exhaustive search over all pairs $(N,l)$ and and wait until a pair satisfying all the 
conditions is found. Since existence of such a pair is guaranteed, this procedure must eventually halt and we use the found 
pair $N,l$ to define  $\gamma_{n}$, which is then computable from $\gamma_{n-1}$.

We now explain how to algorithmically check 
conditions (1)-(5), proving the claim.  Condition (1) is trivial. By \thmref{comp-bded}, the number $r(\gamma_n)$ is computable 
from $\gamma_{n}$ (a bound for the type of $\gamma_{n}$ can be taken to be for instance $B=\max\{a_{1},\ldots,a_{k_{n-1}},N\}$) and therefore,  
by \thmref{comp-bded1}, so is the the linearizing map $\phi_{\gamma_{n}}$. This allows to check conditions (2) and (3). Finally,
positions of repelling periodic cycles of a given period are easily estimated with an arbitrary precision -- and combinatorial 
rotation numbers of cycles of external rays landing on them are also computable without difficulty. Hence, conditions (4) and (5) are
straightforward to verify algorithmically.
\end{proof}

To finish the proof of \thmref{pfmain}, let $\gamma_n$ be the sequence constructed in \lemref{ind1}, and set 
$$\gamma=\lim \gamma_n.$$

\subsection{Concluding remarks}
We note that several intriguing questions remain open. Firstly, it is natural to ask whether, in the conditions
of Main Theorem, the principal impression $\pimp(R_\alpha)$ is  also non-computable. This seems likely, at least in some 
cases. A more challenging problem is whether there may exist non-computable impressions (with computable external angles)
in the case when the whole Julia set is computable. Our present approach would not be applicable in such a situation.
In the case when a quadratic Julia set is locally connected, it admits an explicit topological model (see the discussion in
\cite{BY-lc}). Coupled with computability of the Julia set, this would rule such quadratics out as a source of examples.
Non locally connected computable Siegel Julia sets as well as Cremer Julia sets (which are always not locally connected
and always computable \cite{BBY1}) may potentially contain non-computable impressions.
However, at present we seem to lack the necessary understanding of the structure of impressions in such sets 
to either present such examples, or to rule them out.

\bibliographystyle{plain}
\bibliography{biblio1}

\end{document}